\theoremstyle{plain}
\newtheorem{thm}{Theorem}[section]
\newtheorem{lem}[thm]{Lemma}
\theoremstyle{definition}
\newtheorem{dfnt}{Definition}[section]
\theoremstyle{remark}
\newtheorem*{rmq}{Remark}
\crefname{thm}{theorem}{theorems}
\crefname{lem}{lemme}{lemmes}
\crefname{prop}{proposition}{propositions}
\crefname{cor}{corollaire}{corollaires}
\crefname{dfnt}{definition}{definitions}
\crefname{exmp}{exemple}{exemples}
\crefname{xca}{exercice}{exercices}
\crefname{rmq}{remarque}{remarques}
\crefname{note}{note}{notes}
\crefname{case}{case}{cases}
\crefname{hyp}{hypothèse}{hypothèse}
\newcommand{\FF}{\mathcal{F}}
\newcommand{\MM}{\mathcal{M}}
\newcommand{\QQ}{\mathcal{Q}}
\newcommand{\PP}{\mathcal{P}}
\begin{document}

\title{Polynomial rate of mixing for a family of billiard flows}
\author[1]{Bonnafoux, Etienne \\ email: \href{mailto:etienne.bonnafoux@polytechnique.edu}{etienne.bonnafoux@polytechnique.edu}}
\affil[1]{Centre de mathématiques Laurent-Schwartz, Ecole polytechnique, 91128 Palaiseau Cedex, France}

\providecommand{\keywords}[1]{\textbf{\textit{Key words:}} #1}
\providecommand{\classification}[1]{\textbf{\textit{2020 Mathematics Subject Classification:}} #1}

\maketitle

\begin{abstract}
  We prove that the continuous correlation function decrease polynomially for two families of billiard studied by Chernov and Zhang \cite{chernov2005family}. The main computation is to show that the return function is Hölder on stable and unstable manifold.
\end{abstract}

\section{Introduction}

A planar billiard table $\QQ$ is a connected compact part of $\mathbb{R}^2$. This set generates dynamical systems as one consider the trajectory of a particle going straight in the interior and bouncing off its boundary $\partial \QQ$.

Here we will focus on two families of billiard table studied by Chernov and Zhang \cite{chernov2005family} with a nonuniformly hyperbolic boundary. For $\beta>2$, let $$
g_{\beta}(x)= |x|^{\beta}+1.
$$

Let $\QQ$ be a closed compact domain of $\mathbb{R}^2$ bounded by
\begin{itemize}
  \item The curve of $-g_{\beta}$
  \item The curve of $g_{\beta}$ or the horizontal axis
  \item several strictly convex curves with nowhere vanishing curvature and no cusp.
\end{itemize}

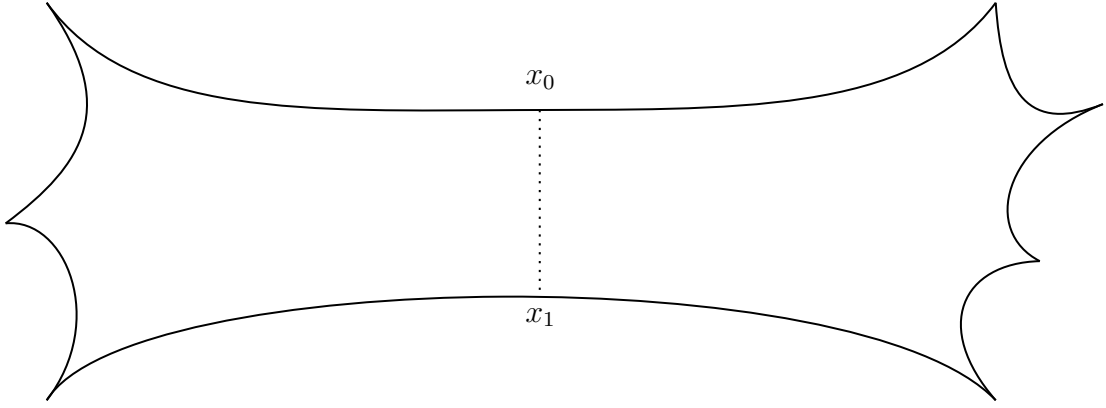
\begin{figure}[!h]
\centering

\tikzset{every picture/.style={line width=0.75pt}} 

\tikzset{every picture/.style={line width=0.75pt}} 

\begin{tikzpicture}[x=0.75pt,y=0.75pt,yscale=-1,xscale=1]

\draw    (80,255) .. controls (100.32,221.39) and (202.31,203.38) .. (307.5,202.87) .. controls (411.98,202.36) and (519.61,219.12) .. (553.5,255) ;
\draw    (80,55) .. controls (121.5,113) and (210.31,109.51) .. (315.5,109) .. controls (420.69,108.49) and (511.5,112) .. (553.5,55) ;
\draw    (553.5,55) .. controls (556.5,101) and (569.5,121) .. (607,106) ;
\draw    (607,106) .. controls (556.5,126) and (546.5,170) .. (575.5,185) ;
\draw    (575.5,185) .. controls (536.5,186) and (521.5,218) .. (553.5,255) ;
\draw    (59.5,166) .. controls (99.5,136) and (116.5,107) .. (80,55) ;
\draw    (80,255) .. controls (110.5,215) and (89.5,163) .. (59.5,166) ;
\draw  [dash pattern={on 0.84pt off 2.51pt}]  (326,109) -- (326,203) ;

\draw (317.33,87.73) node [anchor=north west][inner sep=0.75pt]    {$x_{0}$};
\draw (317.33,207.07) node [anchor=north west][inner sep=0.75pt]    {$x_{1}$};

\end{tikzpicture}

\caption{Example of consider billard, $x_0$ and $x_1$ are point where the curvature vanish.}
\end{figure}

\begin{rmq}
  In the second bullet point, one pass from the two models by symmetrizing along the horizontal axis or by folding along it.
\end{rmq}

For the above billiard tables we call $\MM$ the standard cross-section $\partial \QQ \times [-\pi/2 , \pi /2]$ parametrized by two parameters $(r,\phi)$ where $r$ is the arc length of the boundary (taking an implicit origin for each arc) and $\phi$ the angle with the normal vector to the boundary.

We call $\{\FF^t\}$ the induced billiard flow and $T$ the first return map to $\MM$.
This map and this flow preserve two continuous measure that we denote by $\mu_{\MM}$ for the measure on $\MM$ and $\nu_{\QQ \times \mathbb{S}^1}$ on $\QQ \times \mathbb{S}^1$.

Let $f,g \in L^2(\MM,\mu_{\MM})$ be two functions. The discrete correlation function between the two is defined as $$
C_n(f,g,T,\mu_{\MM}) = \left\vert \int_{\MM} (f \circ T^{n})g d \mu_{\MM} - \int_{\MM} f d \mu_{\MM}\int_{\MM}g d \mu_{\MM} \right\vert.
$$
Similarly, the continuous correlation is also defined for $f,g \in L^2(\QQ \times \mathbb{S}^1,\nu_{\QQ \times \mathbb{S}^1})$as
$$
C_t(f,g,\FF,\nu_{\QQ \times \mathbb{S}^1}) = \left\vert \int_{\QQ \times \mathbb{S}^1} (f \circ \FF^{t})g d \nu_{\QQ \times \mathbb{S}^1} - \int_{\QQ \times \mathbb{S}^1} f d \nu_{\QQ \times \mathbb{S}^1} \int_{\QQ \times \mathbb{S}^1}g d \nu_{\QQ \times \mathbb{S}^1} \right\vert.
$$

Chernov and Zhang \cite{chernov2005family} showed that for this family of billiard,
\begin{thm}
  For this family of billiard tables, the correlations for the billiard map $T :\MM \to \MM$ and piecewise Hölder continuous function $f,g$ on $\MM$ decay as $$
  |C_n(f,g,T,\mu_{\MM})| \leq C_{Th_{1.1}} \frac{(\ln n)^{a+1}}{n^a}
  $$
  where $a=\frac{\beta+2}{\beta-2}$, and $C_{Th_{1.1}}$ a constant.
\end{thm}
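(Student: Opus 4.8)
The plan is to realize $(\MM, T, \mu_\MM)$ as the base of a Young tower and to read off the decay of correlations from the tail of the return time to a uniformly hyperbolic reference set, following the standard programme for hyperbolic billiards with singularities (invariant cone fields, growth lemma, homogeneity layers, coupling of standard pairs). First I would record the hyperbolic skeleton: the billiard map $T$ preserves a continuous field of unstable and stable cones and is uniformly hyperbolic away from the singularity set $\mathcal{S}=\{\phi=\pm\pi/2\}\cup T^{-1}\{\phi=\pm\pi/2\}$ together with the two flat points $x_0,x_1$ where the boundary curvature $\kappa$ vanishes. Because $\kappa(x)\asymp|x|^{\beta-2}$ near each flat point, the unstable expansion factor, which near a collision is of order $1+2\tau\kappa/\cos\phi$, degenerates there, and this is the sole source of nonuniformity. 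I would then subdivide $\MM$ into the usual homogeneity layers $\mathbb{H}_k=\{\cos\phi\asymp k^{-2}\}$ to tame the $\cos\phi\to0$ singularity and to keep distortion and the curvature of unstable manifolds bounded on each piece.

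Second, I would fix a ``magnet'' rectangle $\Lambda\subset\MM$ carrying a hyperbolic product structure — a Cantor family of unstable curves crossed with a Cantor family of stable curves — sitting inside a region of uniform hyperbolicity, and define $R:\Lambda\to\mathbb{N}$ to be the first return time of the induced map with full crossings. Verifying Young's axioms (Markov return branches, uniform backward contraction, bounded distortion, and absolute continuity of the holonomy) is routine in this setting and follows from the growth lemma once the homogeneity decomposition is in place.

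The crux is the tail estimate for $\mu_\MM\{R>n\}$. All slow returns come from orbits that linger in a neighborhood of a flat point: an orbit entering a $\delta$-neighborhood of $x_i$ executes a long cascade of nearly tangential reflections during which the accumulated expansion stays $\ll1$, so it cannot return to $\Lambda$. Here I would carry out the local analysis on the model boundary $y=|x|^\beta$: using $\kappa\asymp|x|^{\beta-2}$ I would show that the number of reflections spent inside the $\delta$-neighborhood grows like a fixed negative power of $\delta$, while the $\mu_\MM$-measure of the set of phase points initiating such a cascade is a corresponding positive power of $\delta$. Eliminating $\delta$ between these two relations and summing the geometric contributions of the homogeneity layers produces
\[
  \mu_\MM\{R>n\}\;\le\;C\,n^{-(a+1)},\qquad a=\frac{\beta+2}{\beta-2}.
\]
Matching the scaling near the flat point to the density $c_\MM\cos\phi\,dr\,d\phi$ of the invariant measure is exactly what pins the exponent to this value of $\beta$; this is the step I expect to be most delicate, since it demands simultaneous control of the degenerating expansion, the growth of unstable curvature, and the distortion along the cascade.

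Finally, I would feed this polynomial tail into the coupling argument for the Young tower (equivalently, Chernov's coupling lemma for standard pairs). For piecewise Hölder observables $f,g$ on $\MM$, two standard pairs decouple at a rate governed by $\mu_\MM\{R>n\}$; the truncation of the coupling stopping time at a logarithmic scale is what converts the tail exponent $a+1$ into the correlation exponent $a$ while contributing the extra factor $(\ln n)^{a+1}$, yielding
\[
  |C_n(f,g,T,\mu_\MM)|\;\le\;C_{Th_{1.1}}\,\frac{(\ln n)^{a+1}}{n^{a}},
\]
with $C_{Th_{1.1}}$ depending only on the table and on the Hölder data of $f,g$.
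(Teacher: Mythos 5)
This theorem is not proved in the paper at all: it is Chernov and Zhang's result, quoted from \cite{chernov2005family} as the starting point, and the paper's own contribution is the continuous-time analogue (Theorem 1.2). So there is no internal proof to compare against. That said, your sketch is a faithful outline of the strategy of the cited proof: inducing on a uniformly hyperbolic subset away from the flat points $x_0,x_1$ (the paper's $\Sigma=\partial\QQ\setminus\{|x|<\epsilon\}\times[-\pi/2,\pi/2]$), establishing the return-time tail $\mu_{\MM}\{R>n\}\le Cn^{-a-1}$ with $a=\frac{\beta+2}{\beta-2}$ via the local analysis of cascades near the flat points where $\kappa\asymp|x|^{\beta-2}$ (this tail bound is exactly what the paper quotes and reuses in its Section 2.2), and then invoking the Chernov--Zhang coupling theorem that converts a tail exponent $a+1$ into correlation decay $n^{-a}$ with the logarithmic correction $(\ln n)^{a+1}$. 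The one structural point worth flagging is that Chernov and Zhang do not induce directly on a single ``magnet'' rectangle $\Lambda$ with product structure; their general theorem takes as input a subset $M'\subset\MM$ on which the \emph{induced} map is shown to enjoy exponential decay (verified via Young's axioms and the growth lemma for the induced system), together with the tail of the return time to $M'$ --- the Cantor rectangle lives one level further down, inside the analysis of the induced map. This is a presentational rather than a mathematical difference, and your identification of the tail estimate as the delicate step, and of the logarithmic truncation of the coupling time as the source of the $(\ln n)^{a+1}$ factor, is accurate.
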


The goal of this paper is to demonstrate a similar result for the continuous correlation, that is

\begin{thm}
  \label{thm_vitesse_billiard_continue}
  For this family of billiard tables, the correlations for the billiard flow $\mathcal{F}$ and for some function $f,g$ on $\QQ \times \mathbb{S}^1$, with some regularity, decay as $$
  |C_n(f,g,\FF,\nu_{\QQ \times \mathbb{S}^1})| \leq C_{Th_{1.2}} \|f \| \| g \| \frac{1}{t^{a}}
  $$
  where $a=\frac{\beta+2}{\beta-2}$, and $C_{Th_{1.2}}$ a constant.
\end{thm}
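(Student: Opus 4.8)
The plan is to realise the billiard flow $\FF$ as a suspension flow over the Poincaré return map $T\colon \MM \to \MM$ with roof function $\tau\colon \MM \to \mathbb{R}_{>0}$, where $\tau(x)$ is the free-flight time (equivalently, since the speed is $1$, the Euclidean chord length) between the collision $x$ and the next one. As $\QQ$ is compact, strictly convex on part of its boundary and has no cusp, $\tau$ is bounded above and bounded below away from $0$, and $\nu_{\QQ \times \mathbb{S}^1}$ is the normalised product of $\mu_{\MM}$ with Lebesgue measure along the flow direction. I would then deduce \Cref{thm_vitesse_billiard_continue} from an abstract theorem transferring a polynomial mixing rate from the base map to the suspension flow, in the spirit of Melbourne and of Bálint--Melbourne. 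The strategy is thus to reduce the statement to checking the hypotheses of such a transfer theorem, and to concentrate the work on the one hypothesis that is specific to this geometry.

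Concretely, I would invoke the transfer result in the following shape: if $(\MM, T, \mu_{\MM})$ is modelled by a Young tower with polynomial return-time tails of order $n^{-(a+1)}$ — the structure that already underlies the discrete decay estimate of Chernov--Zhang recalled above — and if the roof function $\tau$ is Hölder along stable and unstable manifolds and satisfies a uniform non-integrability condition (UNI), then for observables $f,g$ that are Hölder on $\QQ \times \mathbb{S}^1$, and in particular Hölder in the flow direction, one has $C_t(f,g,\FF,\nu_{\QQ \times \mathbb{S}^1}) \le C \|f\|\,\|g\|\, t^{-a}$. The mechanism behind this is to pass to the Laplace transform in time and split the twisted transfer operators $L_{s+ib}$ according to the size of the frequency $b$: the small-$b$ contribution is governed by an operator-renewal expansion and inherits the polynomial rate of the base map (the clean power $t^{-a}$, without the logarithmic factor present in the discrete estimate, reflecting the extra averaging in continuous time), while the large-$b$ contribution is killed by a Dolgopyat-type cancellation estimate for which the UNI condition on $\tau$ is exactly what is required.

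Granting this, three points remain. First, the Young-tower structure with tails $n^{-(a+1)}$, which is available from the Chernov--Zhang analysis. Second, the UNI condition, i.e. that $\tau$ is, in a uniform quantitative sense, not cohomologous to a locally constant function along the tower; this I would obtain from the genuine variation of the chord length across the flat pieces of $\partial \QQ$. Third, and this is the heart of the paper, the Hölder regularity of $\tau$ along stable and unstable manifolds. For this last point the plan is to parametrise a local stable (resp. unstable) manifold $W$ — a smooth curve lying in the stable (resp. unstable) cone field — and to bound $|\tau(x)-\tau(y)|$ for $x,y \in W$ in terms of $d(x,y)$. Away from neighbourhoods of the two points $x_0,x_1$ of vanishing curvature, the boundary and the collision geometry are uniformly transverse, so $\tau$ is Lipschitz there; near $x_0,x_1$, where $g_{\beta}(x)=|x|^{\beta}+1$ has curvature vanishing to order $\beta-2$, collisions become nearly tangential and the derivative of $\tau$ blows up, and one extracts a Hölder exponent $\gamma=\gamma(\beta)\in(0,1]$ from the local model $g_{\beta}(x)\approx |x|^{\beta}$.

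The main obstacle is precisely this last estimate. Since the free-flight function is genuinely singular at grazing collisions and at the flat points, Lipschitz control is hopeless, and one must quantify how the chord length between two successive collisions varies under the stable/unstable holonomy when one or both endpoints slide into the flat region, showing the resulting modulus of continuity is Hölder with an exponent large enough not to degrade the rate $a$. I expect the careful bookkeeping — partitioning trajectories according to how close their collisions come to $x_0,x_1$, controlling the induced distortion, and summing $\sum_{n\ge 0}|\tau(T^n x)-\tau(T^n y)|$ along stable manifolds (and the symmetric sum on unstable manifolds) to obtain the \emph{dynamical} Hölder property demanded by the transfer theorem — to be where essentially all the effort lies. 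Once the modulus of continuity of $\tau$ is pinned down, verifying UNI and feeding everything into the abstract theorem is routine, and the bound with the same exponent $a=\frac{\beta+2}{\beta-2}$ follows.
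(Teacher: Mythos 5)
Your overall architecture matches the paper's: realise the flow as a suspension over the collision map, import the Young-tower structure and the polynomial tail $O(n^{-(a+1)})$ from Chernov--Zhang, feed everything into an abstract transfer theorem of Bálint--Butterley--Melbourne type, and concentrate the new work on the Hölder regularity of the return time along stable and unstable manifolds. Two points differ. First, for the high-frequency cancellation you invoke UNI and a Dolgopyat-type estimate, whereas the paper verifies the weaker hypothesis actually required by Corollary 8.1 of Bálint--Butterley--Melbourne, namely absence of approximate eigenfunctions, and gets it essentially for free from the contact structure of the billiard flow via the temporal distance function (Katok--Burns formula plus positivity of the lower box dimension of its range). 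UNI would suffice but is stronger and harder to check; the contact-form route is the standard shortcut for billiards and you should use it. Second, and more substantively, the quantity that must be Hölder is not the one-step free flight $\tau$ over $\MM$ but the induced return time $\Theta$ to the uniformly hyperbolic part $\Sigma=\partial\QQ\setminus\{|x|<\epsilon\}\times[-\pi/2,\pi/2]$, i.e.\ an unbounded Birkhoff sum of $\tau$ over an excursion of $n$ bounces through the window near the flat points; your sum $\sum_{n}|\tau(T^nx)-\tau(T^ny)|$ is indeed this object, so the plan survives, but your diagnosis of where the difficulty lies is off. Inside the cells $C^n$ the collisions are nearly \emph{normal}, not nearly tangential: a trajectory making many bounces in a window of width $2\epsilon$ between the curves $\pm g_\beta$ must travel almost vertically, and the paper proves (its Lemma 3.3) that the angles stay uniformly away from $\pi/2$, so $\cos\phi$ is bounded below and $\tau$ has no grazing singularity there. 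The real enemy is the unbounded number $n$ of terms in the sum, which is tamed by the asymptotics $x_m\sim m^{2/(2-\beta)}$ for the collision positions and the resulting uniform bound on $\sum_m x_m^{\beta-1}$, together with the decay $|\phi_0-\psi_0|=O(n^{-b})$ along stable/unstable fibres in $C^n$ that absorbs the factor $n$ coming from the angle terms. Your partition of trajectories by proximity to $x_0,x_1$ would have to be replaced by exactly this kind of per-cell asymptotic analysis to close the argument.
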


We will detail later the regularity needed and the norm involved in Theorem \ref{thm_vitesse_billiard_continue}. Nevertheless, we denote $d_{\MM}(\cdot,\cdot)$ the natural distance on $\MM$ and $d_{\QQ \times \mathbb{S}^1}(\cdot,\cdot)$ the one on $\QQ \times \mathbb{S}^1$.

\section{Theoretical aspects of the proof}

To demonstrate \ref{thm_vitesse_billiard_continue} we will apply an abstract framework. More precisely, we will show that this flow is a so called Gibbs-Markov flow, with polynomially decreasing return time tail and without approximate eigenfunctions.
We will explain these terms in the next subsections. For now, let's state the theorem that will be used to demonstrate Theorem \ref{thm_vitesse_billiard_continue}.

\begin{thm}[Corollary 8.1 of \cite{Balint2019polynomial}]
  \label{thm_theorique_Gibbs_Markov_map}
  Let $T_t: M \to M$ be a nonuniformly hyperbolic flow on a metric space $(M,d_M)$ with $diam M < 1$.Fix $ \eta \in (0,1]$. Let $X \subset M$ be a Borel subset, with $f : X \to X$ the return map to $X$ and $h$ the roof function. We suppose that $0< \inf h$, $\sup h < + \infty$ and $h$ is Hölder.

   We suppose that $f$ is modelled by a Young tower with a horseshoe set $Y \subset X$, with an invariant ergodic measure $\mu$. $\bar{Y}$ is the quotient of $Y$ by stable leaves of the dynamic and $\phi$ is the return time to $Y$. If $\tau$ is the number of step made in the Young tower then $$
\phi(x) = \sum_{k=0}^{\tau(x)-1} h(f^k(x))
   $$

   We assume that condition (H) is satisfied, we assume that $\mu(\{ y \in \bar{Y}, \phi(y)>t \}) = O(t^{-\delta})$ for some $\delta > 1$ and we assume absence of approximate eigenfunctions. Then there exists $m \geq 1$ and $ C_{Th_{2.1}} > 0$ such that $$
  |\rho_{v,w}(t)| \leq C_{Th_{2.1}} (\| v \|_{\eta} + \| v \|_{0,\eta})\| w \|_{m,\eta})t^{-\delta+1}
  $$
  for all $v \in \mathcal{C}_{\eta}(M) \cap \mathcal{C}_{0,\eta}(M)$, $w \in \mathcal{C}_{m,\eta}(M)$, $t>1$.
\end{thm}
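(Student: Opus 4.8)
The plan is to follow the operator-renewal strategy for suspension semiflows, reducing the correlation decay of the hyperbolic flow $T_t$ to a spectral analysis of twisted transfer operators over the Gibbs--Markov base. First I would quotient the horseshoe set $Y$ along its stable leaves to obtain the expanding Gibbs--Markov map $\bar f : \bar Y \to \bar Y$ carrying the invariant measure $\mu$, and realize $T_t$ (through the return to $X$ and the roof $h$) as a suspension semiflow over $\bar f$ with roof function $\phi$. The role of the three function spaces and of condition (H) is precisely to make this reduction legitimate: observables in $\mathcal{C}_{\eta}(M)\cap \mathcal{C}_{0,\eta}(M)$ and in $\mathcal{C}_{m,\eta}(M)$ can be approximated and smoothed along stable manifolds so that, after a bounded number $m$ of iterates, $\rho_{v,w}(t)$ becomes comparable to a correlation for the suspension over $\bar f$, the loss being absorbed into the norms on the right-hand side. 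This step is essentially bookkeeping once the hyperbolic product structure of the Young tower is in hand.

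Second, I would introduce the twisted transfer operators $R(s) = L_{\bar f}\!\left(e^{-s\phi}\,\cdot\,\right)$, where $L_{\bar f}$ is the (normalized) transfer operator of $\bar f$, and study them for $s = \epsilon + ib$ as $\epsilon \downarrow 0$. The Laplace transform of the flow correlation can then be written in terms of the resolvent $(I - R(s))^{-1}$ applied to observable-dependent vectors, so the decay rate is dictated by the behaviour of this resolvent along the imaginary axis. Two inputs govern that behaviour. Near $b=0$, the polynomial tail hypothesis $\mu(\phi > t) = O(t^{-\delta})$ translates, via the standard operator-renewal estimates, into the statement that $b \mapsto R(ib)$ admits $\lfloor \delta \rfloor$ derivatives at $0$ together with a Hölder-type remainder of order matching $\delta - 1$; this is what ultimately produces the exponent $t^{-\delta+1}$. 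Away from $b=0$, the Gibbs--Markov map has a spectral gap, $R(0) = L_{\bar f}$ has simple leading eigenvalue $1$, and I need $(I - R(ib))^{-1}$ to exist for every $b \neq 0$ and to stay uniformly bounded as $|b| \to \infty$.

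Third, I would assemble the estimate. Splitting $(I-R(s))^{-1}$ into a renewal part responsible for the polynomial rate near $b=0$ and a part that is uniformly controlled for $|b|$ bounded away from $0$, I would bound $\hat\rho_{v,w}(\epsilon + ib)$ uniformly in $\epsilon$ and invert the Laplace transform by a contour shift of Paley--Wiener type, obtaining $|\rho_{v,w}(t)| \le C\,(\|v\|_{\eta}+\|v\|_{0,\eta})\|w\|_{m,\eta}\,t^{-\delta+1}$ for $t>1$ and a suitable fixed $m \ge 1$.

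The hard part, and the step where the hypotheses are genuinely used, is the high-frequency resolvent bound. Establishing uniform boundedness of $(I - R(ib))^{-1}$ as $|b|\to\infty$ is a Dolgopyat-type oscillatory-cancellation estimate, and it is exactly here that \emph{absence of approximate eigenfunctions} enters: that condition rules out the obstruction, namely an almost-solution of the cohomological relation $b\phi \approx \psi\circ\bar f - \psi + \mathrm{const}$ modulo $2\pi$, which would otherwise force the spectral radius of $R(ib)$ close to $1$ for large $b$. I expect this frequency estimate, rather than the renewal expansion near $0$ or the quotienting step, to be the main obstacle; the remaining pieces then follow from the Melbourne--Terhesiu operator-renewal machinery underlying the cited framework.
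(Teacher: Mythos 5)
This statement is not proved in the paper at all: it is imported verbatim as Corollary 8.1 of \cite{Balint2019polynomial} and used as a black box, so there is no internal proof to compare your attempt against. Judged on its own terms, your outline correctly identifies the architecture of the proof in the cited reference --- quotienting along stable leaves to a Gibbs--Markov base, writing the Laplace transform of the correlation in terms of twisted transfer operators $R(s)$, extracting the $t^{-(\delta-1)}$ rate from the polynomial tail of $\phi$ via operator renewal theory near $b=0$, and using absence of approximate eigenfunctions to control the high-frequency regime before inverting the transform. That is indeed the Melbourne--Terhesiu/Bálint--Butterley--Melbourne strategy, and you are right that the high-frequency estimate is where absence of approximate eigenfunctions enters.

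Two points keep this from being a proof rather than a plan. First, what absence of approximate eigenfunctions actually yields is not uniform boundedness of $(I-R(ib))^{-1}$ as $|b|\to\infty$ (that would be a full Dolgopyat estimate, giving exponential or rapid mixing); it yields only a polynomially growing bound of the form $\|(I-R(ib))^{-1}\| = O(|b|^{\gamma})$, obtained by iterating the bound on $\|M_b^{n_k}\|$ for $n_k = [\zeta\ln|b_k|]$, and one must then check that this polynomial loss in $b$ can be absorbed, which is precisely why $w$ is required to have $m$ derivatives in the flow direction. Second, you pass over the fact that $\phi$ is unbounded (only $h$ is bounded; $\phi$ sums $h$ over the unbounded tower return time $\tau$), so $e^{-s\phi}$ does not act nicely on the symbolic Hölder space and the renewal expansion cannot be run directly: the cited proof requires a truncation of the roof function with the error controlled by the tail condition, and this interplay between truncation level, frequency $b$, and the exponent $\delta$ is where most of the technical work lies. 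As an outline of the known argument your proposal is sound; as a proof it defers every analytically substantive step to the machinery it invokes.
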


The next subsections explain the different terms appearing in the hypothesis of Theorem \ref{thm_theorique_Gibbs_Markov_map} and, for some hypothesis, demonstrate them on the fly, with formal arguments. We will end this section by indicating which computations remain to finish the proof. The next section will deal with these computations.

So for the rest of the section we will discuss
\begin{itemize}
  \item Young tower and condition (H),
  \item polynomial tail for the return function,
  \item absence of approximate eigenfunctions,
  \item and the relation with norms appearing in Theorem \ref{thm_theorique_Gibbs_Markov_map} and Hölder observables.
\end{itemize}

\subsection{Young tower and condition (H)}

A \emph{Gibbs-Markov flow} is an abstract framework developed by Bálint, Butterley and Melbourne \cite{Balint2019polynomial}. They used it to compute rate of mixing for some families of flow. In the same work, they indicated how to link this class of flow to suspensions over \emph{Young tower}.

Young tower is a method developed by Young to understand rate of mixing of billiard. First she introduced it for exponential mixing and then expend the range of application to slowly mixing systems.

These two notions are linked in Section 7.2 of \cite{Balint2019polynomial}. The authors indicated that if a flow is non-uniformly hyperbolic and modelled by a Young tower then if two conditions called condition (6.1) and condition (H) are checked then, the flow is Gibbs-Markov.

In our case, the work of Chernov and Zhang \cite{chernov2005billiards}, indicates that the billiard flow of these tables are, indeed, modelled by a Young tower with a hyperbolic part described below, in Section \ref{sec_calcul_billiard}.

Furthermore, Lemma 8.3 of the article of Bálint, Butterley and Melbourne \cite{Balint2019polynomial} reduce the  condition (H) to Hölder estimations for the roof function $\phi$ on stable and unstable leaves. These estimates are given with respect to a metric given by the partition $\{ Y_j \}$ given in the definition of Gibbs-Markov flow.

Specifically, they ask that if $y,y' \in Y$,
\begin{itemize}
  \item $| \phi(y) - \phi(y')| \leq C_4' d_M(y,y')$ for all $y' \in W^s(y)$,
  \item $| \phi(y) - \phi(y')| \leq C_4' \iota^{s(y,y')}$ for all $y' \in W^u(y),s(y,y') \geq 1$,
\end{itemize}
with $s(\cdot,\cdot)$ the separation time in the Young tower and $\iota \in (0,1)$.

Remark that we can relax the Lipschitz condition to a Hölder condition by considering the distance $d_M'(\cdot,\cdot)=d_M(\cdot,\cdot)^{\gamma}$. If we prove a $\gamma$-Hölder inequality for $d_M$, we have a Lipschitz inequality for $d_M'$.

In our case we will compute Hölder estimations with the natural distance on the border cross-section of flow of the billiard. These estimations are stronger than the of Lemma 8.3 of \cite{Balint2019polynomial} given above.

Namely, we will show that for a hyperbolic part $\Sigma$ of the cross-section $\partial \QQ \times \mathbb{S}^1$, for $y,y' \in C^n$, where $C^n$ are some neighborhood described below in Section \ref{sec_calcul_billiard},
\begin{itemize}
  \item $|\Theta(y)-\Theta(y')| \leq C_4 d_{\MM}(y,y')^{\gamma}$ if $y \in W^s(y')$,
  \item $|\Theta(y)-\Theta(y')| \leq C_4 d_{\MM}(T_{\Sigma}(y),T_{\Sigma}(y'))^{\gamma}$ if $y \in W^u(y')$,
\end{itemize}
where $T_{\Sigma}$ is the return map to $\Sigma$ and $\Theta$ the associated roof function and $\gamma \in (0,1)$.

\subsection{Polynomial tail for the return function}

A first condition to apply Theorem \ref{thm_theorique_Gibbs_Markov_map}, is that the tail of the roof function is polynomial that is $$\mu(\{ y \in \bar{Y}, \phi(y) > t \}) = O(t^{-\delta})$$ for some $\delta > 1$.

In our case, for the hyperbolic past $\Sigma$, Chernov and Zhang have defined the function $R : \Sigma \to \mathbb{N}$, which counts the number of bounces of a trajectory on the border of the billiard before it goes back to the hyperbolic part $\Sigma$. They have shown that \cite{chernov2005family}
$$
\mu_{\MM|_{\Sigma}}(\{ x \in \Sigma, R(x) > n\}) \leq C_5 n^{-a-1}
$$
with $a=\frac{\beta+2}{\beta-2}$. As the diameter of the billiard table $D$ is finite, we have for our roof function $\Theta$
\begin{align*}
  \mu_{\MM|_{\Sigma}}(\{ x \in \Sigma , \Theta(x) > t \}) & \leq \nu(\{ x \in \Sigma , R(x) > \lfloor t/D \rfloor \}) \\
  & \leq C_5 \lfloor t/D \rfloor^{-a-1} \\
  & \leq 2 C_5 (t/D)^{-a-1}
\end{align*}
with the last inequality true for $t$ large enough. So this condition is verified in our case.

\subsection{Absence of approximate eigenfunctions}

A second condition in Theorem \ref{thm_theorique_Gibbs_Markov_map} that we should check is the absence of approximate eigenfunctions. We will recall quickly the definition and then give indication on how to prove it.

For $b\in \mathbb{R}$, we define the operators $M_b : L^{\infty}(\bar{Y}) \to L^{\infty}(\bar{Y})$ by
$$
  M_b v = e^{i b \phi}v \circ \bar{F}
$$
\begin{dfnt}[approximate eigenfunctions]
  A subset $Z_0 \subset \bar{Y}$ is a \emph{finite subsystem} of $\bar{Y}$ if $Z_0 =\cap_{n \geq 0} \bar{F}^{-n}Z$ where $Z$ is the union of finitely many elements from the partition $\{ \bar{Y}_j \}$.
   We say that $M_b$ has \emph{approximate eigenfunctions} on $Z_0$ if for any $\alpha_0 > 0$, there exist constants $\alpha,\zeta > \alpha_0$ and $C>0$, and sequences $|b_k| \to \infty$, $\psi_k \in [0,2\pi)$, $u_k \in \mathcal{F}_{\theta}(\bar{Y})$
    with $|u_k|=1$ and $|u_k|_{\theta} \leq C |b_k|$, such that setting $n_k = [\zeta \ln|b_k|]$, $$
    \left\vert (M^{n_k}_{b_k}u_k)(y)) - e^{i \psi_k} u_k(y) \right\vert \leq C_6 |b_k|^{-\alpha}
    $$
    for all $y \in Z_0$, $k \geq 1$. $\mathcal{F}_{\theta}$ and $|\cdot|_{\theta}$ are a Banach space and its associated norm defined in \cite{Balint2019polynomial}.
\end{dfnt}

There are different strategies to show the absence of approximate eigenfunctions.
One is to consider the \emph{temporal distance} defines as follows.
Let $y_1,y_4 \in Y$ and set $y_2 = W^s(y_1) \cap W^u(y_4)$ and $y_3 = W^u(y_1) \cap W^s(y_4)$ then the temporal distance is defined as $$
D(y_1,y_4)= \sum_{n= - \infty}^{\infty} \phi(F^n(y_1))-\phi(F^n(y_2))+\phi(F^n(y_3))-\phi(F^n(y_4)).
$$
\begin{lem}[Theorem 5.6 of \cite{melbourne2018superpolynomial}]
  Let $Z_0= \cap_{n=0}^\infty F^{-n} Z$ where $Z$ is a union of finitely many elements of the partition $\{ Y_j \}$. Let $Z_0$ denote the corresponding finite subsystem of $\bar{Y}$. If the lower box dimension of $D(Z_0 \times Z_0)$ is positive, then there do not exist approximate eigenfunctions on $\bar{Z_0}$.
\end{lem}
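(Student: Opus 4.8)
The plan is to argue by contradiction: assuming that $M_b$ admits approximate eigenfunctions on $\bar{Z_0}$, I will show that the range $D(Z_0 \times Z_0)$ must be squeezed into an arithmetic-like set so thin that its lower box dimension vanishes. First I would unwind the iterated operator. Since $M_b v = e^{ib\phi} v \circ \bar{F}$, an immediate induction gives $M_b^{n} u(y) = e^{i b \phi_n(y)} u(\bar{F}^n y)$, where $\phi_n = \sum_{j=0}^{n-1}\phi \circ \bar{F}^j$ is the Birkhoff sum of the roof function. Hence the defining inequality of approximate eigenfunctions, with $b = b_k$ and $n = n_k = [\zeta \ln |b_k|]$, reads $|e^{i b_k \phi_{n_k}(y)} u_k(\bar{F}^{n_k} y) - e^{i\psi_k} u_k(y)| \le C_6 |b_k|^{-\alpha}$ for every $y \in Z_0$, and since $|u_k| = 1$ this is a statement purely about the phase $b_k \phi_{n_k}(y)$ modulo the controlled oscillation of $\arg u_k$.

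Next I would exploit the local product structure of the finite subsystem. Given $y_1, y_4 \in Z_0$, form $y_2 = W^s(y_1)\cap W^u(y_4)$ and $y_3 = W^u(y_1)\cap W^s(y_4)$, which again lie in $Z_0$ by invariance. Writing the four approximate-eigenfunction inequalities at $y_1, y_2, y_3, y_4$ and forming the alternating combination, the factors $e^{i\psi_k}$ cancel, and the factors $u_k$ cancel up to their oscillation, which is bounded via the regularity $|u_k|_\theta \le C|b_k|$ together with the contraction along stable and unstable leaves over the $n_k$ shadowing steps. The surviving phase is exactly a truncation of the temporal distance, and the telescoping of the Birkhoff-sum differences along stable leaves (forward) and unstable leaves (backward) converges to $D(y_1,y_4)$; this convergence, with geometric rate, is guaranteed by the same Hölder estimates on $\phi$ along $W^s$ and $W^u$ that furnish condition (H). The outcome is an inequality of the shape $|e^{i b_k D(y_1,y_4)} - 1| \le C |b_k|^{-\alpha'}$, valid for all $y_1, y_4 \in Z_0$ with $\alpha' > 0$ comparable to $\alpha$. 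I expect this \emph{Dolgopyat-type cancellation estimate} to be the main obstacle: the logarithmic time scale $n_k = [\zeta \ln|b_k|]$ is precisely tuned so that the symbolic contraction $\theta^{n_k}$ overwhelms the polynomial loss $|b_k|$ coming from $|u_k|_\theta \le C|b_k|$, and making every error term uniform over $Z_0 \times Z_0$ is where the delicate bookkeeping lies.

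Finally the conclusion is a short box-counting argument. The inequality above forces $\operatorname{dist}(D(y_1,y_4), \tfrac{2\pi}{b_k}\mathbb{Z}) \le C|b_k|^{-1-\alpha'}$, so the whole set $D(Z_0\times Z_0)$, which is bounded, lies within the $C|b_k|^{-1-\alpha'}$-neighbourhood of the lattice $\tfrac{2\pi}{b_k}\mathbb{Z}$. Covering at scale $\epsilon_k = |b_k|^{-1-\alpha'} \to 0$ therefore needs at most $O(|b_k|)$ intervals, whence $\frac{\log N(\epsilon_k)}{\log(1/\epsilon_k)} \le \frac{1}{1+\alpha'} + o(1)$ and, taking the liminf along this subsequence, $\underline{\dim}_B D(Z_0\times Z_0) \le \frac{1}{1+\alpha'}$. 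Because the definition of approximate eigenfunctions lets $\alpha$, and hence $\alpha'$, be taken larger than any prescribed $\alpha_0$, one may choose $\alpha_0$ large enough that $\frac{1}{1+\alpha'}$ falls strictly below the assumed positive lower box dimension of $D(Z_0\times Z_0)$, producing the desired contradiction. The heart of the matter is thus the cancellation and convergence step of the middle paragraph; the dimensional bookkeeping at the end is elementary once the Diophantine trapping is in hand.
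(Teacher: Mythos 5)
This lemma is not proved in the paper at all: it is imported verbatim as Theorem 5.6 of \cite{melbourne2018superpolynomial}, so there is no internal proof to compare against. Your sketch is, however, essentially the standard argument from that reference (going back to Dolgopyat and to Field--Melbourne--T\"or\"ok): unwind $M_b^{n}u = e^{ib\phi_n}\,u\circ\bar{F}^{n}$, take the alternating combination over the four points of a local product rectangle so that $\psi_k$ cancels and the $u_k$ oscillation is controlled by $|u_k|_\theta\le C|b_k|$ times the contraction $\theta^{n_k}=|b_k|^{\zeta\ln\theta}$, obtain $|e^{ib_kD(y_1,y_4)}-1|\le C|b_k|^{-\alpha'}$, and conclude by the box-counting estimate $\underline{\dim}_B\,D(Z_0\times Z_0)\le \frac{1}{1+\alpha'}$, which can be driven to $0$ since $\alpha_0$ is arbitrary. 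The endgame is correct as you state it. The one genuine wrinkle your middle paragraph glosses over is that $M_b$ acts on the quotient $\bar{Y}$ (stable leaves collapsed) while $D$ is defined on $Y$ and its series runs over all $n\in\mathbb{Z}$; passing between the two requires either replacing $\phi$ by a cohomologous function depending only on future coordinates or a careful two-sided telescoping with errors measured in the separation time $s(\cdot,\cdot)$, and this is precisely the ``delicate bookkeeping'' you flag rather than carry out. As an outline of the cited theorem's proof it is accurate; as a self-contained proof it leaves that cancellation step to be filled in.
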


For flows with a contact structure, a formula for $D$ exists in \cite{KatokBurns} at Lemma 3.2., and then the lower box dimension is positive as in \cite{melbourne2009decay} Example 5.7.

As this last condition is verified in our case, the flow of billiard table, the absence of eigenfunctions condition is checked.

\subsection{Functional space}

Given $v : \QQ \times \mathbb{S}^1 \to \mathbb{R}$ and $\eta > 0$ we define $$
|v|_{\eta}= \sup_{x \ne x'}\frac{|v(x)-v(x')|}{d_{\QQ \times \mathbb{S}^1}(x,x')^{\eta}}.
$$
Then $\|v \|_{\eta}= |v|_{\eta} + \| v \|_{\infty}$. We call $\mathcal{C}^{\eta}$ the space of function with finite $\| \cdot \|_{\eta}$ norms.

Then we will need to control regularity in the flow direction. For $v$ a function, let's define $$
|v|_{\eta,0}= \sup_{x \in \QQ \times \mathbb{S}^1 , t>0} \frac{v(\FF^t x)-v(x)}{t^{\eta}}.
$$
Again $\| \cdot \|_{\eta,0}= |\cdot|_{\eta,0}+ \| \cdot \|_{\infty}$ and $\mathcal{C}^{\eta,0}$ is the corresponding functional space.

The \emph{differentiability in the flow direction} a function $v$ is defined as the well-definess of $$
\partial_{\FF} v := \lim_{t \to 0} \frac{v(\FF^t x)-v(x)}{t}.
$$
For a $m$-differentiable function in the flow direction define $ \| v \|_{\eta,m}=\sum_{j=0}^m \| \partial_{\FF}^m v\|_{\eta}$ and call $\mathcal{C}^{\eta,m}$ the associated functional space.

In our case we can take in Theorem \ref{thm_vitesse_billiard_continue} the norm to be $\| \|_{\eta,m}$ with any $\eta \in (0,1]$ and $m$ given by Theorem \ref{thm_theorique_Gibbs_Markov_map}.

\subsection{What we need to show}

To sum up this section, we emphasize that the last condition that we need to check to apply Theorem \ref{thm_theorique_Gibbs_Markov_map} in our case is the following.

There is a uniformly hyperbolic part of $\MM$ called $\Sigma$, with $T_{\Sigma}$ the return map to $\Sigma$ and $\Theta$ the associated roof function. On it, we have the following, there is $\gamma \in (0,1)$ such that for $y,y'\in \Sigma$,

\begin{itemize}
  \item $|\Theta(y)-\Theta(y')| \leq C_4 d_{\MM}(y,y')^{\gamma}$ if $y \in W^s(y')$,
  \item $|\Theta(y)-\Theta(y')| \leq C_4 d_{\MM}(T_{\Sigma}(y),T_{\Sigma}(y'))^{\gamma}$ if $y \in W^u(y')$,
\end{itemize}

These Hölder conditions are the object of the following section. Once it has been done, we would have demonstrated Theorem \ref{thm_vitesse_billiard_continue} using Theorem \ref{thm_theorique_Gibbs_Markov_map} where the power $\delta$ is equal to $a+1$ and one can take the class of function which are $\gamma$-Hölder and $m,\gamma$-Hölder in the flow direction as the required regularity.

\section{Computations for the Lipschitz condition}
\label{sec_calcul_billiard}

\subsection{Definition and structure of $\Sigma$}

Let $\epsilon >0$ be a fixed, small enough constant fixed until the end of the article.
Following Chernov and Zhang \cite{chernov2005family}, we define a part of the billiard away from the points of vanishing curvature.

$$
\Sigma = \partial Q \backslash \{ |x| < \epsilon \} \times [-\pi /2, \pi /2].
$$

We call $\tau$ the return time from a point of $ \MM $ to $\MM$ and $\Theta$ the return time from $\Sigma$ to $\Sigma$. Moreover we call $T_{\Sigma}$ the return map to $\Sigma$.

Let's recall from \cite{chernov2005family} the structure of $\Sigma$ near $\{ |x| < \epsilon \} \subset \partial Q$.
Trajectories leaving $\Sigma$ and such that the first rebound is in $\mathcal{P}:= \{ |x| < \epsilon \} \times [-\pi /2, \pi /2]$ have typically two behaviors. The first one move through the window. This set is decomposed by the number of rebound inside $\mathcal{P}$, and we will call $C^n$ the set for where it bounces $n$ times.

A second type of trajectory approach the central axis and then turn back and hit $\Sigma$ on the same side. As previously, we can refine this set by counting the number of bounces and call $C^{-n}$ when it does $n$ bounces.

\begin{figure}[!h]
\centering

\tikzset{every picture/.style={line width=0.75pt}} 

\tikzset{every picture/.style={line width=0.75pt}} 

\tikzset{every picture/.style={line width=0.75pt}} 

\begin{tikzpicture}[x=0.75pt,y=0.75pt,yscale=-1,xscale=1]

\draw [line width=2.25]    (74,255) .. controls (94.32,221.39) and (196.31,203.38) .. (301.5,202.87) .. controls (405.98,202.36) and (513.61,219.12) .. (547.5,255) ;
\draw [line width=2.25]    (74,55) .. controls (115.5,113) and (204.31,109.51) .. (309.5,109) .. controls (414.69,108.49) and (505.5,112) .. (547.5,55) ;
\draw  [dash pattern={on 0.84pt off 2.51pt}]  (320,109) -- (320,203) ;
\draw  [dash pattern={on 0.84pt off 2.51pt}]  (432,106) -- (432,212) ;
\draw  [dash pattern={on 0.84pt off 2.51pt}]  (210,107.67) -- (210,208.67) ;
\draw [color={rgb, 255:red, 74; green, 144; blue, 226 }  ,draw opacity=1 ] [dash pattern={on 4.5pt off 4.5pt}]  (418.33,106.67) -- (470.25,219.5) ;
\draw [shift={(446.38,167.63)}, rotate = 245.29] [fill={rgb, 255:red, 74; green, 144; blue, 226 }  ,fill opacity=1 ][line width=0.08]  [draw opacity=0] (10.72,-5.15) -- (0,0) -- (10.72,5.15) -- (7.12,0) -- cycle    ;
\draw [color={rgb, 255:red, 74; green, 144; blue, 226 }  ,draw opacity=1 ] [dash pattern={on 4.5pt off 4.5pt}]  (418.33,106.67) -- (389.67,206.67) ;
\draw [color={rgb, 255:red, 74; green, 144; blue, 226 }  ,draw opacity=1 ] [dash pattern={on 4.5pt off 4.5pt}]  (375.67,109.67) -- (389.67,206.67) ;
\draw [color={rgb, 255:red, 74; green, 144; blue, 226 }  ,draw opacity=1 ] [dash pattern={on 4.5pt off 4.5pt}]  (375.67,108.67) -- (365,203.67) ;
\draw [color={rgb, 255:red, 74; green, 144; blue, 226 }  ,draw opacity=1 ] [dash pattern={on 4.5pt off 4.5pt}]  (367,107.67) -- (365,204.67) ;
\draw [shift={(365.9,161.17)}, rotate = 271.18] [fill={rgb, 255:red, 74; green, 144; blue, 226 }  ,fill opacity=1 ][line width=0.08]  [draw opacity=0] (10.72,-5.15) -- (0,0) -- (10.72,5.15) -- (7.12,0) -- cycle    ;
\draw [color={rgb, 255:red, 74; green, 144; blue, 226 }  ,draw opacity=1 ] [dash pattern={on 4.5pt off 4.5pt}]  (367,107.67) -- (369,203.33) ;
\draw [color={rgb, 255:red, 74; green, 144; blue, 226 }  ,draw opacity=1 ] [dash pattern={on 4.5pt off 4.5pt}]  (386,109.25) -- (369,205.33) ;
\draw [color={rgb, 255:red, 74; green, 144; blue, 226 }  ,draw opacity=1 ] [dash pattern={on 4.5pt off 4.5pt}]  (386,108.25) -- (402.25,207.5) ;
\draw [color={rgb, 255:red, 74; green, 144; blue, 226 }  ,draw opacity=1 ] [dash pattern={on 4.5pt off 4.5pt}]  (442,110.25) -- (402.25,207.5) ;
\draw [shift={(420.23,163.5)}, rotate = 292.23] [fill={rgb, 255:red, 74; green, 144; blue, 226 }  ,fill opacity=1 ][line width=0.08]  [draw opacity=0] (10.72,-5.15) -- (0,0) -- (10.72,5.15) -- (7.12,0) -- cycle    ;
\draw [color={rgb, 255:red, 208; green, 2; blue, 27 }  ,draw opacity=1 ]   (138.5,219) -- (195.8,166.04) ;
\draw [shift={(198,164)}, rotate = 137.25] [fill={rgb, 255:red, 208; green, 2; blue, 27 }  ,fill opacity=1 ][line width=0.08]  [draw opacity=0] (10.72,-5.15) -- (0,0) -- (10.72,5.15) -- (7.12,0) -- cycle    ;
\draw [color={rgb, 255:red, 208; green, 2; blue, 27 }  ,draw opacity=1 ]   (198,164) -- (257.5,109) ;
\draw [color={rgb, 255:red, 208; green, 2; blue, 27 }  ,draw opacity=1 ]   (254.24,109.76) -- (311.04,155.98) ;
\draw [shift={(313.37,157.88)}, rotate = 219.14] [fill={rgb, 255:red, 208; green, 2; blue, 27 }  ,fill opacity=1 ][line width=0.08]  [draw opacity=0] (10.72,-5.15) -- (0,0) -- (10.72,5.15) -- (7.12,0) -- cycle    ;
\draw [color={rgb, 255:red, 208; green, 2; blue, 27 }  ,draw opacity=1 ]   (313.37,157.88) -- (372.5,206) ;
\draw [color={rgb, 255:red, 208; green, 2; blue, 27 }  ,draw opacity=1 ]   (368.73,205.38) -- (410.18,156.48) ;
\draw [shift={(412.12,154.19)}, rotate = 130.28] [fill={rgb, 255:red, 208; green, 2; blue, 27 }  ,fill opacity=1 ][line width=0.08]  [draw opacity=0] (10.72,-5.15) -- (0,0) -- (10.72,5.15) -- (7.12,0) -- cycle    ;
\draw [color={rgb, 255:red, 208; green, 2; blue, 27 }  ,draw opacity=1 ]   (412.12,154.19) -- (455.5,103) ;

\draw (311.33,87.73) node [anchor=north west][inner sep=0.75pt]    {$x_{0}$};
\draw (311.33,207.07) node [anchor=north west][inner sep=0.75pt]    {$x_{1}$};
\draw (424,219.4) node [anchor=north west][inner sep=0.75pt]    {$\epsilon $};
\draw (206,216.4) node [anchor=north west][inner sep=0.75pt]    {$-\epsilon $};
\draw (132,226.4) node [anchor=north west][inner sep=0.75pt]    {$p$};
\draw (466,224.4) node [anchor=north west][inner sep=0.75pt]    {$q$};

\end{tikzpicture}

\caption{Two different kind of trajectory}
\end{figure}
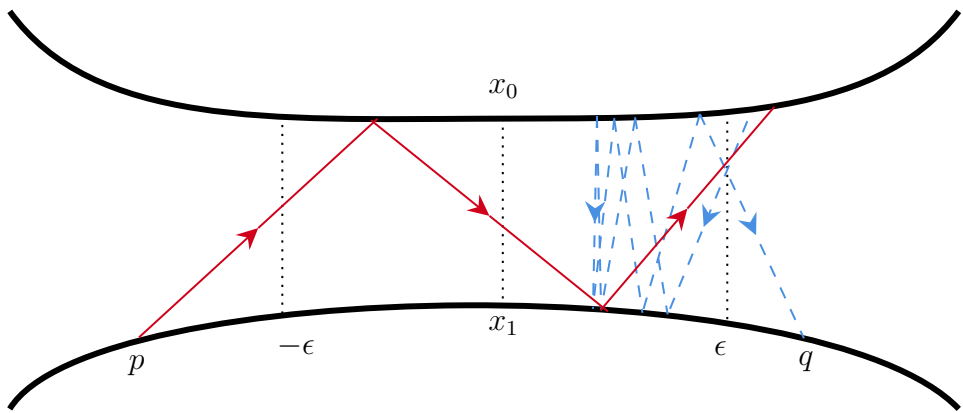

In the following of the paper, when we talk about a point $(x,\phi) \in \MM$, the measure of the angle $\phi$ is taken to be the measure of the angle between the trajectory and the vertical direction, not the normal vector to $\MM$.

The dimension of the stable and unstable variety inside $C^n$ for $n \in \mathbb{Z}$ is as follows.

\begin{lem}
  \label{lem_structure_Sigma}
  Let $p=(x,\phi),q=(y,\psi) \in C^n$ such that $q \in W^s(p)$, then
  $$
|\phi-\psi| = O(n^{-b})
  $$
  where $b=2+\frac{\beta+2}{\beta-2}$.
  If $q \in W^u(p)$, then
  $$
  |\phi-\psi| = O(n^{-b}) \text{ and } |x-y| = O(n^{-b})
  $$
\end{lem}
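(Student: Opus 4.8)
The plan is to track a point of $C^n$ collision by collision through the window $\PP$ and to read off the transverse sizes of the stable and unstable manifolds from the linearised collision map, using the local form of the boundary near the degenerate point. Since $g_\beta(x)=|x|^\beta+1$ has curvature $\kappa(x)\sim\beta(\beta-1)|x|^{\beta-2}$ and slope $\sim\beta|x|^{\beta-1}$, both vanishing at $x=0$, a point of $C^n$ is a nearly vertical trajectory that stays close to the flat direction for all $n$ reflections. First I would model the passage through $\PP$ by the induced wall-to-wall map, tracking the horizontal position $x_k$ and the horizontal velocity $u_k=\sin\phi_k$ at the $k$-th collision (recall $\phi$ is the angle to the \emph{vertical}, so $u_k$ is small). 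The reflection law off $\pm g_\beta$ gives, to leading order, $x_{k+1}-x_k\approx c_1 u_k$ and $u_{k+1}-u_k\approx -c_2\,\mathrm{sign}(x_k)\,|x_k|^{\beta-1}$, so that $(x_k,u_k)$ shadows the orbit of the Hamiltonian $H(x,u)=\tfrac{c_1}{2}u^2+\tfrac{c_2}{\beta}|x|^{\beta}$, with the collision index $k$ playing the role of time, and $E=H(x_k,u_k)$ is an adiabatic invariant of the passage.

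The heart of the argument is then a scaling computation. Writing the number of collisions as the time to cross the window, $n\sim\int \frac{dx}{u}=\int \frac{dx}{\sqrt{2(E-c|x|^\beta)/c_1}}$, and using $\beta>2$ so that the integral is governed by the region near $x=0$, gives $n\sim E^{-\frac{\beta-2}{2\beta}}$, i.e. $E\sim n^{-(a+1)}$ with $a+1=\frac{2\beta}{\beta-2}$. This reproduces the tail exponent of Chernov--Zhang and fixes the scale of a single cell: since $C^n$ is a level set of the integer collision count, its width in $E$ is $|dE/dn|\sim n^{-(a+2)}=n^{-b}$, because $b=2+a$. To turn this $E$-width into sizes in the coordinates $(x,\phi)$ I would linearise the collision map through the familiar continued-fraction evolution of the wavefront curvature, $\BB^+=\frac{2\kappa}{\cos\phi}+(\tau+1/\BB^-)^{-1}$; near $x=0$ the curvature term is $O(|x|^{\beta-2})$, so each individual collision is close to parabolic and the hyperbolicity is built up slowly over the $n$ steps. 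The stable and unstable directions are the two invariant cone fields, forward-contracting on $W^s$ and backward-contracting on $W^u$.

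The two statements of the lemma then come from the two one-sided contractions, both endpoints being forced to share the same cell $C^n$ and hence the same leading value of $E$ up to the width $n^{-b}$. For $q\in W^u(p)$, propagating the unstable Jacobi field backward through the $n$ near-parabolic collisions collapses the admissible initial data: the whole unstable curve inside $C^n$ has diameter of order $n^{-b}$ in \emph{both} coordinates, giving $|x-y|=O(n^{-b})$ and $|\phi-\psi|=O(n^{-b})$ (this is why the second bullet of the target estimate is naturally phrased through $T_\Sigma$). For $q\in W^s(p)$ the time-reversed estimate applies only to the angle: forward contraction forces $|\phi-\psi|=O(n^{-b})$, while the position is free to range over the full width of the cell, so no comparable bound on $|x-y|$ is claimed. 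In every case the exponent is exactly the cell width $b=2+a$ produced above.

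\textbf{The hard part} will be making the first and third steps rigorous: justifying that the discrete wall-to-wall map is uniformly shadowed by the flow of $H$ with a controlled error summed over all $n$ collisions (an averaging estimate in a regime that is only parabolically hyperbolic), and then upgrading the per-collision linearisations, which are individually close to the identity since $\kappa(x)\to0$, into the \emph{sharp} power $n^{-b}$ rather than a weaker bound. Matching the asymmetry of the two cases, in particular explaining why the stable estimate controls only $\phi$ whereas the unstable one controls both $x$ and $\phi$, requires keeping careful track of the shape of the invariant cones as $x\to0$. For the uniform distortion, growth, and curvature-bound constants I would rely on those already established by Chernov and Zhang \cite{chernov2005family} for precisely these tables.
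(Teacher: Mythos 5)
The paper itself offers no proof of this lemma: it is stated as a structural fact about the cells $C^n$ imported from Chernov and Zhang \cite{chernov2005family}, so there is no in-paper argument to compare yours against. Judged on its own, your sketch has the right skeleton for producing the exponent: the effective one-degree-of-freedom description of the window dynamics, the (approximately) conserved quantity $E$, the count $n\sim E^{-\frac{\beta-2}{2\beta}}$, and hence $E\sim n^{-(a+1)}$ and cell width $|dE/dn|\sim n^{-(a+2)}=n^{-b}$, is exactly the mechanism behind $b=a+2$. One correction before anything else: the kick at a collision with $y=-(1+|x|^{\beta})$ is $u\mapsto u+2\beta|x|^{\beta-1}\mathrm{sign}(x)$, i.e. the effective potential is an inverted hill $-c|x|^{\beta}$, not a well. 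With your sign, every trajectory entering at $|x|=\epsilon$ has $E\geq c\epsilon^{\beta}$ bounded below and crosses in $O(1)$ collisions, so there would be no cells $C^n$ with $n$ large at all; the long passages come from near-separatrix motion over the top of the hill ($E>0$ small for crossing orbits, $E<0$ for the turning-back orbits $C^{-n}$). Your integral happens to give the same exponent, but the picture as written is inconsistent with the existence of the objects you are studying.

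The genuine gap is the last step, and it is precisely the asymmetric content of the lemma. For $q\in W^u(p)$ your argument is essentially fine: the boundaries of $C^n$ are stable curves, an unstable curve is uniformly transverse to them with bounded cone opening, so it crosses a strip of transverse width $n^{-b}$ and both coordinate extents are $O(n^{-b})$. For $q\in W^s(p)$, however, "forward contraction forces $|\phi-\psi|=O(n^{-b})$" is not an argument: the bound is claimed at the \emph{initial} point, before any contraction has acted, and contraction from an $O(1)$ initial separation at a polynomial rate does not return $n^{-b}$ at time zero. Concretely, a curve running \emph{along} the cell (tangent to the level sets of $E$) at the entrance of the window has slope $du/dx\sim c\beta\epsilon^{\beta-1}/u\sim\epsilon^{\beta/2-1}$, which is bounded away from $0$ uniformly in $n$; so if $W^s(p)\cap C^n$ had $x$-extent of order $1$, its $\phi$-extent would be of order $1$ as well, contradicting the claim. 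The stable half of the lemma therefore requires an additional input that your sketch does not supply: either that the stable cone at points of $C^n$ is aligned with $\{E=\mathrm{const}\}$ only up to a $\phi$-discrepancy of size $n^{-b}$ over the whole cell, or that the actual stable manifolds of the return map (as cut by future singularities in the Young-tower construction) are much shorter than the cell. Pinning down which of these holds, with the precise power, is where the lemma actually lives, and it is the part that the proof of the Hölder estimate for $\Theta$ (the paper's equation \ref{eq_control_diff_theta}) consumes via the bound $n|\phi_0-\psi_0|^{1-\gamma}\leq n^{1-b(1-\gamma)}$.
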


It will be more convenient for us to work with the folding of the table along the $x$-axis. In this framework, $C^n$ and $C^{-n}$ are defined by counting the number of bounces along the curve $-g_{\beta}$ (and not the number of bouncing along the $x$-axis).

\subsection{$\Theta$ is Lipschitz}

 We begin this section by giving two lemmas which are true for the two kind of trajectories.

First, we could compute the return time to the border $\tau$ in the window $\PP$ in the natural coordinate.
\begin{lem}
  \begin{equation}
    \label{eq_return_time}
    \tau(x,\phi)=\frac{1+x^{\beta}}{\cos \phi} + \frac{1+x'^{\beta}}{\cos \phi}
  \end{equation}

  where $x'$ is the position of the next point.
\end{lem}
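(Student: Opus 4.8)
The plan is to derive the statement by a direct trigonometric computation, using the explicit equation of the boundary curve bounding the window together with the unit speed of the billiard flow. I will work in the folded model introduced just above, in which the table near the central axis is bounded below by the curve $y=-g_\beta(x)=-(1+|x|^\beta)$ and above by the $x$-axis, the latter acting as a reflecting fold line rather than as a genuine part of $\partial Q$. Since the flow has unit speed, $\tau(x,\phi)$ is exactly the Euclidean length of the trajectory issued from the bounce on $-g_\beta$ at horizontal position $x$ until it next meets $\partial Q$, i.e.\ until it returns to $-g_\beta$ at the following position $x'$. In the folded picture this trajectory is made of two straight segments meeting at the (virtual) reflection point on the $x$-axis: an ascending one from the curve up to the axis, and a descending one from the axis back down to the curve.

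The key step is to project each of these two segments onto the vertical direction. By the convention fixed just before the lemma, $\phi$ denotes the angle between the trajectory and the \emph{vertical} direction, so a straight segment of length $L$ spanning vertical extent $\Delta$ satisfies $\Delta=L\cos\phi$, whence $L=\Delta/\cos\phi$. The first segment rises from height $-(1+|x|^\beta)$ to height $0$, hence has vertical extent $1+|x|^\beta$; the second descends from $0$ to $-(1+|x'|^\beta)$, hence has vertical extent $1+|x'|^\beta$. The point that makes both terms carry the same factor $1/\cos\phi$ is that reflection in the horizontal axis preserves the angle with the vertical, so the incoming and outgoing segments share the same $\phi$; equivalently, in the unfolded model the whole crossing from $-g_\beta$ to $g_\beta$ is a single straight segment of constant direction.

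Adding the two lengths then gives
$$
\tau(x,\phi)=\frac{1+|x|^\beta}{\cos\phi}+\frac{1+|x'|^\beta}{\cos\phi},
$$
which is the claimed identity once $|x|^\beta$ is written $x^\beta$ under the fixed-sign convention used inside the window $\PP$. I expect no genuine obstacle here, as the computation is elementary; the only points demanding care are bookkeeping ones. First, I must keep the angle measured from the vertical and not from the boundary normal, so that the vertical projection involves $\cos\phi$ and not $\sin\phi$. Second, I must justify that the ascending and descending segments make the same angle $\phi$ with the vertical, which is precisely the reflection law across the horizontal axis, so that the two summands share the common denominator $\cos\phi$. With these conventions in place the formula follows immediately.
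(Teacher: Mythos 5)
Your proposal is correct and is precisely the ``direct trigonometric consideration'' the paper invokes: projecting the two segments onto the vertical, using the convention that $\phi$ is measured from the vertical and that reflection in the horizontal axis preserves this angle, exactly as depicted in the paper's accompanying figure. No meaningful difference in approach.
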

\begin{proof}
The proof is direct with trigonometric considerations.
\end{proof}
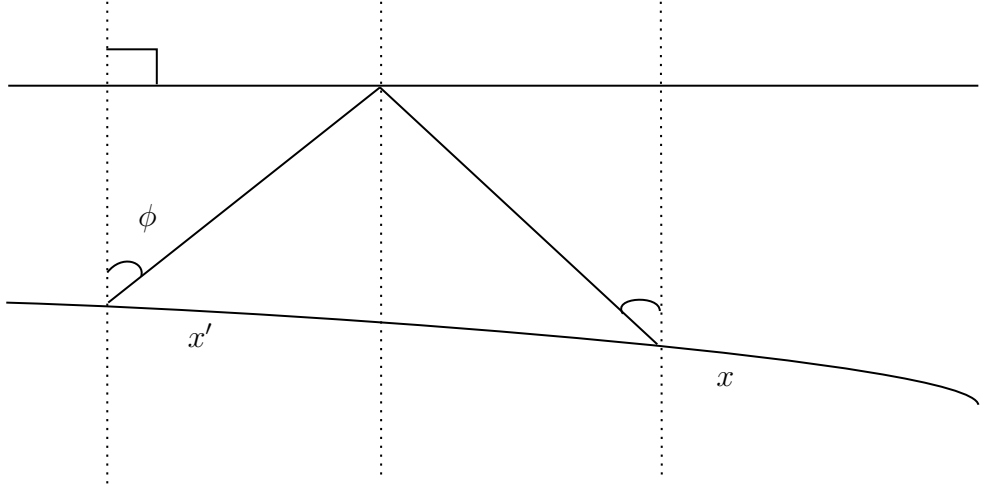
\begin{figure}[!h]
\centering

\tikzset{every picture/.style={line width=0.75pt}} 

\begin{tikzpicture}[x=0.75pt,y=0.75pt,yscale=-1,xscale=1]

\draw    (41.47,58.24) -- (525.5,58.24) ;
\draw    (40.5,167.36) .. controls (207.17,172.29) and (522.59,197.64) .. (525.5,218.76) ;
\draw  [dash pattern={on 0.84pt off 2.51pt}]  (227.52,16) -- (227.52,254.66) ;
\draw    (226.94,59.09) -- (365.32,188.34) ;
\draw    (226.94,59.09) -- (91.37,167.36) ;
\draw  [dash pattern={on 0.84pt off 2.51pt}]  (367.06,16) -- (367.55,253.96) ;
\draw  [dash pattern={on 0.84pt off 2.51pt}]  (90.89,16) -- (90.89,261) ;
\draw    (90.89,152.23) .. controls (98.16,142.72) and (109.79,146.95) .. (107.85,153.99) ;
\draw    (348.17,173) .. controls (342.35,164.55) and (366.58,163.14) .. (366.58,171.59) ;
\draw   (90.41,39.94) -- (115.6,39.94) -- (115.6,57.54) ;

\draw (104.74,116.12) node [anchor=north west][inner sep=0.75pt]    {$\phi $};
\draw (393.39,201.01) node [anchor=north west][inner sep=0.75pt]    {$x$};
\draw (129.59,176.37) node [anchor=north west][inner sep=0.75pt]    {$x'$};

\end{tikzpicture}
\caption{Length between two bounces on the lower part}
\end{figure}

To control the return times in equation \ref{eq_return_time}, we have to control the denominator. The following lemma assure that the angles are such that their cosines are bounded away from $0$.

\begin{lem}
\label{lem_angle_pas_trop_petit}
There is a constant $\Phi$ such that for every $|n| \geq 2$, every $1 \leq m \leq n$ and every $p=(x,\phi)\in C^n$ we have
$$
|\phi_m - \pi/2| > \Phi.
$$
Consequently $cos(\phi_m)$ is bounded away from $0$.
\end{lem}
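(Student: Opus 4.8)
The plan is to track the angle with the vertical along the orbit inside the window $\PP$ by applying the reflection law at each bounce, working in the folded table so that the orbit bounces alternately off the flat lid (the $x$-axis) and off the slightly curved floor $-g_{\beta}$. Two elementary facts drive everything. First, a reflection off the horizontal axis preserves the inclination from the vertical exactly, since it only reverses the vertical component of the velocity. Second, at the $m$-th bounce off the floor, at abscissa $x_m$ with $|x_m|<\epsilon$, the tangent to $-g_{\beta}$ makes an angle $\theta_m=\arctan(-g_{\beta}'(x_m))$ with the horizontal, so the reflection changes the inclination from the vertical by $2\theta_m$; here $|\theta_m|\le \arctan(\beta\epsilon^{\beta-1})$ is small, because the slope of $g_{\beta}$ vanishes to high order at the origin, and the sign of $\theta_m$ is opposite to that of $x_m$. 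I will write $\phi_m$ for the signed inclination, so that the geometric angle with the vertical is $|\phi_m|$.

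The heart of the argument is to show that along the whole passage through $\PP$ the inclination is maximal in modulus at the two edges $|x|=\epsilon$. I will deduce this from the exact signs of the two coupled updates: over one up--down cycle the horizontal displacement $x_{m+1}-x_m$ has the sign of $-\phi_m$ (the orbit drifts in the direction it leans), while the increment $\phi_{m+1}-\phi_m=2\theta_m$ has the sign of $-x_m$. Together these say that $|\phi_m|$ decreases while the orbit moves toward $x=0$ and increases while it recedes; informally this is the statement that $\tfrac12\phi^2-|x|^{\beta}$ is an approximate invariant, with floor reflections acting as the force $-\tfrac{d}{dx}|x|^{\beta}$. It follows that the orbit first approaches the axis $x=0$ (or, in the $C^{-n}$ case, a turning point where $\phi=0$) with decreasing $|\phi|$ and then recedes with increasing $|\phi|$, so that $\max_m|\phi_m|$ is attained at entry and exit, where $|x|=\epsilon$.

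It then remains to bound the inclination at the edge. Since $|n|\ge 2$, at least two floor bounces occur strictly inside $\PP$, so the horizontal displacement over the first cycle is at most the window width $2\epsilon$; as the channel height $g_{\beta}\ge 1$ forces this displacement to be at least $2\tan|\phi_{\mathrm{enter}}|$, I obtain $|\phi_{\mathrm{enter}}|\le\arctan\epsilon$. Combining with the monotonicity above and the smallness of each floor increment gives $|\phi_m|\le \arctan\epsilon+O(\epsilon^{\beta-1})\le C\epsilon$ for every $m$, uniformly in $n$. Taking $\Phi=\pi/2-C\epsilon$, positive for $\epsilon$ small, yields $|\phi_m-\pi/2|>\Phi$ and hence $\cos\phi_m\ge\cos(C\epsilon)>0$. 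The point I expect to be the main obstacle is making the monotonicity of the second paragraph fully rigorous in the genuinely discrete dynamics: one must control the coupled updates of $(x_m,\phi_m)$ and rule out the small per-bounce changes of inclination spoiling the ``approach then recede'' structure, rather than merely invoking the invariant $\tfrac12\phi^2-|x|^{\beta}$, which holds only up to the approximations used above.
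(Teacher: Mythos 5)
Your third paragraph already contains the whole of the paper's proof: two consecutive collision points inside the window $\PP$ are at most $2\epsilon$ apart horizontally, yet the channel forces them to be at least the height $1+|x|^{\beta}\ge 1$ apart vertically (per half-cycle), so the free-flight direction satisfies $\tan|\phi|\le\epsilon$ and $\cos\phi$ is bounded below uniformly in $n$ and $m$. The paper's argument is exactly this two-line observation (its displayed lower bound $\epsilon^{\beta}/\sqrt{\epsilon^{2}+\epsilon^{2\beta}}$ is much weaker than what this geometry actually gives, namely $\cos\phi\ge(1+\epsilon^{2})^{-1/2}$, which is what your computation produces).

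The genuine gap in your write-up is the one you flag yourself: the monotonicity of $|\phi_m|$ along the passage, derived from the approximate invariant $\tfrac12\phi^{2}-|x|^{\beta}$, is asserted but not proved, and your argument as structured needs it to propagate the entry bound $|\phi_{\mathrm{enter}}|\le\arctan\epsilon$ to all intermediate $m$. But that entire layer is unnecessary. For every $1\le m<n$ the outgoing segment at the $m$-th bounce joins two points of $\PP$, so the height-versus-width estimate of your third paragraph applies verbatim at each such $m$, with no information carried from one bounce to the next; the final angle $\phi_n$ is then controlled by noting that the reflection at a wall of slope at most $\beta\epsilon^{\beta-1}$ changes the inclination from the vertical by $O(\epsilon^{\beta-1})$, so $\phi_n=\phi_{n-1}+O(\epsilon^{\beta-1})$ is still small. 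Deleting your first two paragraphs and applying the third to each consecutive pair of bounces closes the gap and recovers the paper's proof; the averaging picture you invoke is the substance of \cref{lem_estim_billiard}, not of this lemma, and importing it here only creates an obligation you cannot yet discharge.
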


\begin{proof}
If $n \geq 2$, the trajectory contains at least two bounces. If we named $x$ and $y$ the two most far apart then their distance is at most $2 \epsilon$, and  $\cos(\phi)$ is at least $\frac{|\epsilon|^{\beta}}{\sqrt{\epsilon^2+(|\epsilon|^{\beta})^2}}$.
\end{proof}

Now, we will focus on the trajectories going through the window. So let $n>0$.

To control the trajectories we will cut them in three pieces:
\begin{itemize}
  \item the first part where the angles are still large,
  \item a second part until we cross the line $x=0$,
  \item the rest of the trajectory until we come back to the hyperbolic part.
\end{itemize}

Let $p=(x,\phi) \in C^n$. We define $1 \leq n' \leq n$ to be the integer such that $x_{n'+1} \leq 0 \leq x_{n'}$ and $n'' \in [1,n']$ be uniquely defined by $$
w_{n''-1} > 2 w_{n'} > w_{n''}.
$$

Then we have the different estimates
\begin{lem}
  \label{lem_estim_billiard}

  In the first part of the trajectory, we have for all $1<m\leq n''$
  $$
  x_m \sim m^{\frac{2}{2-\beta}}.
  $$

  In the second part, for all $n'' \leq m \leq n'$
  $$
    x_m \sim (n'-m)w_n'
  $$
  and
  $$
  n^{\frac{\beta}{2-\beta}} \sim w_{n'}
  $$

\end{lem}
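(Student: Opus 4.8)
The plan is to reduce the motion inside the window $\PP$ to a one-dimensional recursion on the pair $(x_m,\psi_m)$, where $\psi_m$ is the small angle the $m$-th flight makes with the vertical, and then to analyse this recursion by comparison with a continuous model. By the symmetry of the table it is enough to treat the part of the orbit with $x_m>0$ (indices $1\le m\le n'$); the part with $x_m<0$ is identical after the reflection $x\mapsto -x$. First I would write down the exact bounce-to-bounce map on the folded curve $-g_\beta$: combining the flight geometry behind \eqref{eq_return_time} with the reflection law, the horizontal increment $w_m:=x_m-x_{m+1}$ and the angle satisfy
\begin{align*}
  w_m &= \big(g_\beta(x_m)+g_\beta(x_{m+1})\big)\tan\psi_m,\\
  \psi_{m+1} &= \psi_m - 2\arctan\!\big(\beta\,x_{m+1}^{\,\beta-1}\big).
\end{align*}
Since $|x_m|<\epsilon<1$ and $\beta>2$ we have $g_\beta(x_m)=1+|x_m|^\beta=1+O(\epsilon^\beta)$, and inside $\PP$ the angles are small, so up to factors $1+O(\epsilon^\beta)$ the system linearises to $w_m\approx 2\psi_m$ and $\psi_{m+1}-\psi_m\approx -2\beta\,x_{m+1}^{\,\beta-1}$. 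Treating $m$ as continuous this is exactly $x'=-2\psi$, $\psi'=-2\beta x^{\beta-1}$, whose power solutions $x\sim m^{p}$ already force $p=\tfrac{2}{2-\beta}$.

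Next I would exploit the conserved quantity of the continuous system. One checks that $\psi_m^2-2x_m^\beta$ is invariant for the limiting flow, and that it is preserved up to a controlled error along the true orbit, by expanding $\psi_{m+1}^2-\psi_m^2=(\psi_{m+1}-\psi_m)(\psi_{m+1}+\psi_m)$, comparing it with $2(x_{m+1}^\beta-x_m^\beta)$ through the recursion, and telescoping. Because the angle is minimal at the crossing $x=0$, this yields
\begin{equation*}
  \psi_m^2 \approx \psi_{n'}^2 + 2x_m^\beta ,
\end{equation*}
which splits the orbit into two regimes. In the first part, where $2x_m^\beta\gg\psi_{n'}^2$, the identity gives $\psi_m\approx\sqrt2\,x_m^{\beta/2}$, hence $x_m-x_{m+1}\approx 2\sqrt2\,x_m^{\beta/2}$; summing $\sum_k (x_k-x_{k+1})x_k^{-\beta/2}$ against $\int x^{-\beta/2}\,dx\sim x^{1-\beta/2}$ gives $m\sim x_m^{1-\beta/2}$, that is $x_m\sim m^{2/(2-\beta)}$, the first claim.

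For the second part, where $x_m$ is so small that $2x_m^\beta\ll\psi_{n'}^2$, the same identity gives $\psi_m\approx\psi_{n'}$, so $w_m\approx 2\psi_{n'}=w_{n'}$ is essentially constant and $x_m\approx(n'-m)\,w_{n'}$. The crossover index $n''$ is exactly where $2x_{n''}^\beta\sim\psi_{n'}^2$, i.e. $x_{n''}\sim\psi_{n'}^{2/\beta}$, which is what the defining relation $w_{n''-1}>2w_{n'}>w_{n''}$ encodes. Finally I would count bounces in both regimes: the first part contributes $n''\sim x_{n''}^{1-\beta/2}\sim\psi_{n'}^{2/\beta-1}$ bounces, the second part $n'-n''\sim x_{n''}/w_{n'}\sim\psi_{n'}^{2/\beta-1}$, so $n\sim 2n'\sim\psi_{n'}^{(2-\beta)/\beta}$. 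Inverting this and using $w_{n'}=2\psi_{n'}$ gives $w_{n'}\sim n^{\beta/(2-\beta)}$, the remaining claim.

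The argument is clean at the level of the continuous model, so the step I expect to be delicate is controlling the accumulation of the errors introduced by $\tan\psi_m\approx\psi_m$, $g_\beta\approx1$, $\arctan\approx\mathrm{id}$ and $x_m\approx x_{m+1}$ over a number of bounces that grows with $n$. To prevent the $1+O(\epsilon^\beta)$ and discretisation errors from corrupting the leading power-law and linear asymptotics, I would set up an inductive bootstrap — or equivalently an invariant-tube/monotonicity argument — showing that the true orbit stays within a constant factor of the continuous solution uniformly in $n$, and I would treat the immediate neighbourhood of the crossing (where the curvature term changes sign) separately by symmetry. Throughout, \cref{lem_angle_pas_trop_petit} guarantees that the cosines stay bounded away from $0$, so no flight time blows up during this comparison.
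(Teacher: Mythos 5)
The paper does not actually prove this lemma: it is stated without proof and is implicitly imported from Chernov and Zhang \cite{chernov2005family}, so there is no in-paper argument to compare yours against. That said, your derivation reconstructs essentially the standard argument from that source: reduce to the discrete recursion $w_m \approx 2\psi_m$, $\psi_{m+1}-\psi_m \approx -2\beta x_{m+1}^{\beta-1}$, observe the adiabatic invariant $\psi_m^2 - 2x_m^\beta \approx \mathrm{const} = \psi_{n'}^2$, and read off the two regimes $\psi_m \approx \sqrt{2}\,x_m^{\beta/2}$ (giving $x_m \sim m^{2/(2-\beta)}$ by summing $w_m x_m^{-\beta/2}$ against the integral) and $\psi_m \approx \psi_{n'}$ (giving the linear profile $x_m \approx (n'-m)w_{n'}$). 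Your bounce count in the two regimes, each of order $\psi_{n'}^{(2-\beta)/\beta}$, correctly yields $w_{n'} \sim n^{\beta/(2-\beta)}$, and your reading of the crossover condition $w_{n''-1} > 2w_{n'} > w_{n''}$ as $x_{n''} \sim \psi_{n'}^{2/\beta}$ is the right one. The exponents all check out.

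The one substantive issue is the step you yourself flag and postpone: the uniform control of the error between the true orbit and the continuous model. This is not a routine verification, because the number of bounces $n$ is unbounded and the relative one-step errors (from $\tan\psi\approx\psi$, $\arctan\approx\mathrm{id}$, $g_\beta\approx 1$, and $x_{m+1}^{\beta-1}$ versus the mean-value point $\xi^{\beta-1}$ in the telescoping of $\psi_{m+1}^2-\psi_m^2$ against $2(x_{m+1}^\beta-x_m^\beta)$) must be summed over all $n$ steps without destroying the two-sided bounds. In Chernov--Zhang this is handled by an explicit induction maintaining two-sided constants $c_1 m^{2/(2-\beta)} \le x_m \le c_2 m^{2/(2-\beta)}$, and a proof intended to be self-contained would need to carry that out; as written, your argument establishes the asymptotics only at the level of the continuous model. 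Since the lemma is stated with $\sim$ (two-sided bounds up to constants), that bootstrap is the actual content of the proof rather than a technicality, so the proposal should be regarded as a correct and well-targeted sketch with this one step still to be supplied.
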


Furthermore, we will need to control the sum of power of position of the bounces along a trajectory.

\begin{lem}
  \label{lem_sum_pos_beta}
  Let $p=(x,\phi) \in C^n$. If $x_m$ is the position after the $m$-th rebound then $$
  \sum_{m=0}^n x_m^{\beta-1} \leq C_{Lem_{3.5}}
  $$
  where $C_{Lem_{3.5}}$ does not depend on $n$.
\end{lem}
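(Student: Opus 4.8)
The plan is to split the sum according to the three-part decomposition of the trajectory introduced just before \cref{lem_estim_billiard}, namely $m\in\{0,\dots,n''\}$ (first part), $m\in\{n'',\dots,n'\}$ (second part), and $m\in\{n',\dots,n\}$ (the return journey after crossing the central axis), and to bound each piece separately using the asymptotics of \cref{lem_estim_billiard}. The point is that all three contributions are bounded by a constant depending only on $\beta$ and $\epsilon$ but not on $n$: the first part is a convergent series, while the second part, although it contains a number of terms growing with $n$, is killed by the smallness of the step size $w_{n'}$ near the crossing.

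For the first part, \cref{lem_estim_billiard} gives $x_m\sim m^{2/(2-\beta)}$, hence $x_m^{\beta-1}\sim m^{2(\beta-1)/(2-\beta)}$. Since $\beta>2$, a short computation shows that the exponent satisfies $\frac{2(\beta-1)}{2-\beta}<-2<-1$. Therefore $\sum_m m^{2(\beta-1)/(2-\beta)}$ is a convergent $p$-series, and $\sum_{1<m\le n''}x_m^{\beta-1}$ is bounded independently of $n$ (the finitely many terms with $m=0,1$ contributing at most $\epsilon^{\beta-1}$ each, by the definition of $\Sigma$).

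For the second part I would use $x_m\sim (n'-m)\,w_{n'}$ together with $w_{n'}\sim n^{\beta/(2-\beta)}$. Writing $k=n'-m$ and comparing with an integral, $\sum_{m=n''}^{n'}x_m^{\beta-1}\sim w_{n'}^{\beta-1}\sum_{k=0}^{n'-n''}k^{\beta-1}\le C\,w_{n'}^{\beta-1}(n'-n'')^{\beta}$. Bounding crudely $n'-n''\le n'\le n$ and inserting $w_{n'}\sim n^{\beta/(2-\beta)}$ yields the upper bound $C\,n^{\beta(\beta-1)/(2-\beta)}\,n^{\beta}=C\,n^{\beta/(2-\beta)}$, where the exponent collapses via the identity $\frac{\beta(\beta-1)}{2-\beta}+\beta=\frac{\beta}{2-\beta}$. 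As $\beta>2$ this exponent is negative, so the second part is not merely bounded but tends to $0$ as $n\to\infty$. For the third part, after the crossing at index $n'$ the trajectory performs, by the symmetry of the window $\PP$ about the axis $x=0$, a mirror image of the incoming journey, so that the magnitudes $|x_m|$ for $m>n'$ retrace those of the second and first parts in reverse order; hence $\sum_{m=n'}^{n}|x_m|^{\beta-1}$ is controlled by the same constant. Summing the three contributions gives $\sum_{m=0}^n x_m^{\beta-1}\le C_{Lem_{3.5}}$ with $C_{Lem_{3.5}}$ independent of $n$.

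The main obstacle is the second part: a priori it has $\Theta(n)$ terms, none of which is individually small, so only exact exponent bookkeeping reveals boundedness. The crucial and somewhat delicate point is the cancellation $\frac{\beta(\beta-1)}{2-\beta}+\beta=\frac{\beta}{2-\beta}<0$, which relies on combining the width asymptotics $w_{n'}\sim n^{\beta/(2-\beta)}$ with the linear decay $x_m\sim(n'-m)w_{n'}$ near the crossing, both from \cref{lem_estim_billiard}. Everything else is a routine comparison of sums with integrals; one should only take minor care at the two junctions $m=n''$ and $m=n'$, checking that the two asymptotic regimes match there so that no term is double counted or omitted.
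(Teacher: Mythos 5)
Your proposal is correct and follows essentially the same route as the paper: the same three-part cut at $n''$ and $n'$, the convergent $p$-series for the first part, the same exponent cancellation $\frac{\beta(\beta-1)}{2-\beta}+\beta=\frac{\beta}{2-\beta}<0$ for the middle part, and a time-reversal/symmetry reduction for the tail. The only (immaterial) difference is that you bound $\sum_k k^{\beta-1}$ by $(n'-n'')^\beta$ via integral comparison where the paper crudely majorizes each term by $n^{\beta-1}$; both give the same final exponent.
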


\begin{proof}
  We cut the sum in three parts
  $$
  \sum_{m=0}^n x_m^{\beta-1} = \sum_{m=0}^{n''} x_m^{\beta-1} + \sum_{m=n''+1}^{n'} x_m^{\beta-1} + \sum_{m=n'}^n x_m^{\beta-1}.
  $$
  For the first part, lemma \ref{lem_estim_billiard} gives that there exist a $C_{Lem_{3.4}}$ such that $x_m \leq C_{Lem_{3.4}} m^{\frac{2}{2-\beta}}$, so
  $$
  \sum_{m=0}^{n''} x_m^{\beta-1} \leq C_{Lem_{3.4}} \sum_{m=1}^{n''} m^{\frac{2(\beta-1)}{2-\beta}} < + \infty.
  $$
  For the second part, the same lemma \ref{lem_estim_billiard} indicates that there exists $C_{Lem_{3.4}}'$ such that $x_m \leq C_{Lem_{3.4}}'(n'-m)n^{\frac{\beta}{\beta-2}}$, hence
  \begin{align*}
      \sum_{m=n''+1}^{n'} x_m^{\beta-1} & \leq C_{Lem_{3.4}}' \sum_{m=n''+1}^{n'} (n'-m)^{\beta-1}n^{\frac{\beta(\beta-1)}{2-\beta}} \\
      & \leq C_{Lem_{3.4}}' \sum_{m=1}^{n} n^{\beta-1}n^{\frac{\beta(\beta-1)}{2-\beta}} \\
      & \leq C_{Lem_{3.4}}' n^{\beta+\frac{\beta(\beta-1)}{2-\beta}} \ \\
      & = C_{Lem_{3.4}}' n^{-\frac{\beta}{\beta-2}}.
  \end{align*}
  As the last term goes to $0$ when $n$ goes to infinity, the sequence is bounded.

  Finally, for the third term, by reversing time, one can consider a trajectory coming from the hyperbolic part where $x < 0$, and bound it as the two first terms above.
\end{proof}

We can now demonstrate that the return time to the hyperbolic part is Hölder.

\begin{lem}
  There is a $\gamma \in (0,1)$ such that for any $p=(x,\phi) \in C^n$ and $q=(y,\psi) \in C^n \cap W^s(x)$,
  $$
  \vert \Theta(p)-\Theta(q) \vert \leq d_{\MM}(p,q)^\gamma.
  $$
\end{lem}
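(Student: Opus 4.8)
The plan is to realise $\Theta$ as a sum of the elementary return times \eqref{eq_return_time} along the window excursion and to estimate the difference termwise. Writing the bounce data of $p$ as $(x_m,\phi_m)$ and of $q$ as $(y_m,\psi_m)$ for $0\le m\le n$, \eqref{eq_return_time} gives
$$
\Theta(p)-\Theta(q)=\sum_{m=0}^{n}\left(\frac{2+x_m^{\beta}+x_{m+1}^{\beta}}{\cos\phi_m}-\frac{2+y_m^{\beta}+y_{m+1}^{\beta}}{\cos\psi_m}\right).
$$
I would split each summand into a position contribution (from the numerators) and an angle contribution (from the denominators). By \Cref{lem_angle_pas_trop_petit} every $\cos\phi_m$ and $\cos\psi_m$ is bounded below by a constant depending only on $\epsilon$ and $\beta$, so $1/\cos\phi_m\le C$ and $|1/\cos\phi_m-1/\cos\psi_m|\le C|\phi_m-\psi_m|$. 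The task therefore reduces to bounding $\sum_m|x_m^{\beta}-y_m^{\beta}|$ and $\sum_m(2+x_m^{\beta}+x_{m+1}^{\beta})|\phi_m-\psi_m|$ by $C\,d_{\MM}(p,q)^{\gamma}$.

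For the position contribution I would use $|x_m^{\beta}-y_m^{\beta}|\le\beta\max(x_m,y_m)^{\beta-1}|x_m-y_m|$ together with \Cref{lem_sum_pos_beta}, which bounds $\sum_m x_m^{\beta-1}$ uniformly in $n$. Since $p$ and $q$ lie on a common stable leaf, the forward dynamics contracts the separation of the two orbits, so $\sup_m|x_m-y_m|\le C|x_0-y_0|\le C\,d_{\MM}(p,q)$; hence $\sum_m|x_m^{\beta}-y_m^{\beta}|\le C\,d_{\MM}(p,q)$. The same estimate, combined with $x_m<\epsilon<1$ (so that $\sum_m x_m^{\beta}\le\sum_m x_m^{\beta-1}\le C$), controls the part $\sum_m(x_m^{\beta}+x_{m+1}^{\beta})|\phi_m-\psi_m|$ of the angle contribution. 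As $\operatorname{diam}\MM<1$, each of these is $\le C\,d_{\MM}(p,q)^{\gamma}$ for any $\gamma\in(0,1)$.

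The genuine obstacle is the remaining term $2\sum_{m=0}^{n}|\phi_m-\psi_m|$. One cannot simply bound each $|\phi_m-\psi_m|$ by the initial angular separation: on a stable leaf the full separation $\|(x_m-y_m,\phi_m-\psi_m)\|$ contracts forward, but the position component $|x_0-y_0|$ may be of order one (\Cref{lem_structure_Sigma} controls the stable leaf only in the angle, not in position), and the reflections in the window feed this position difference back into the angle. Moreover the window is only parabolically hyperbolic near the flat point $x=0$, so the contraction is not geometric while there are $\sim n$ terms. To control the sum I would decompose the excursion into the three segments of \Cref{lem_estim_billiard} (the incoming part $1\le m\le n''$, the slow part $n''\le m\le n'$ near $x=0$, and the outgoing part, handled by reversing time as in the proof of \Cref{lem_sum_pos_beta}) and estimate the contraction of $|\phi_m-\psi_m|$ in each. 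In the incoming segment the curvature along the orbit behaves like $x_m^{\beta-2}\sim m^{-2}$, and I expect the contraction accumulated there to dominate the nearly flat middle segment, so that $\sum_m|\phi_m-\psi_m|$ is ultimately governed by the entry separation, which \Cref{lem_structure_Sigma} bounds by $O(n^{-b})$ with $b=2+\frac{\beta+2}{\beta-2}$.

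The decisive feature is that $b>2$, which holds precisely because $\beta>2$ (equivalently $a>0$). This strong polynomial thinness of $C^n$ is what I expect to convert the accumulated angle differences into a genuine power $d_{\MM}(p,q)^{\gamma}$ with some $\gamma\le 1-\tfrac{1}{b}<1$, so that a Hölder rather than a Lipschitz estimate is the natural conclusion. The heart of the matter, and the main obstacle, is the segment-by-segment contraction estimate for $|\phi_m-\psi_m|$: one must show that, despite the vanishing curvature near $x=0$ and a possibly order-one position separation on the stable leaf, the angle differences decay rapidly enough along the excursion for their sum to be controlled by a power of $d_{\MM}(p,q)$. All the remaining steps are the routine bookkeeping sketched above.
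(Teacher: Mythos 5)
Your decomposition of $\Theta(p)-\Theta(q)$ into per-bounce terms, the split into position and angle contributions, the use of Lemma \ref{lem_angle_pas_trop_petit} and Lemma \ref{lem_sum_pos_beta} for the position part, and the final exponent $\gamma\le 1-1/b$ all coincide with the paper's proof; the structure of your argument is the right one. However, the step you explicitly leave open --- controlling $\sum_{m=0}^{n}|\phi_m-\psi_m|$ --- is the heart of the proof, and announcing a segment-by-segment contraction analysis over the three regimes of Lemma \ref{lem_estim_billiard} without carrying it out leaves a genuine gap at the self-identified central point. The paper closes this step far more cheaply: it asserts that along a stable manifold \emph{both} components of the separation decrease monotonically under the forward dynamics, so that $|x_m-y_m|\le|x_0-y_0|$ and $|\phi_m-\psi_m|\le|\phi_0-\psi_0|$ for every $m$ (the first of these you already use to control the position sum, so you are implicitly relying on the same monotonicity you then question for the angles). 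Granting this, the angle sum is at most $n|\phi_0-\psi_0|$, and the interpolation
$$
n|\phi_0-\psi_0|=n\,|\phi_0-\psi_0|^{\gamma}\,|\phi_0-\psi_0|^{1-\gamma}\le|\phi_0-\psi_0|^{\gamma}\,n^{1-b(1-\gamma)}
$$
together with $|\phi_0-\psi_0|=O(n^{-b})$ from Lemma \ref{lem_structure_Sigma} yields a bounded factor as soon as $\gamma\le 1-1/b$, which is exactly the exponent you predicted. So the missing ingredient in your write-up is not the elaborate curvature-weighted contraction estimate you sketch, but the simpler monotonicity of the angle separation along stable leaves; until either that monotonicity is justified or your three-segment program is actually executed, the proof is incomplete at its decisive step.
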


\begin{proof}
  We will write $p_m=(x_m,\phi_m)$ and $q_m=(y_m,\psi_m)$ for the $m$-th rebound.

  Without loss of generality we can consider that $x<y$.

  This implies that $x_m \leq y_m$ for any $m \in [1,n]$ and that $\phi_m \leq \psi_m$.

With the triangle inequality, we bound the difference between the return times to the hyperbolic part by the difference of return times for each return to the boundary of the billiard.

   $$
  |\Theta(p)-\Theta(q)| \leq \sum_{m=0}^n |\tau(p_m) - \tau(q_m)|.
  $$

  Then

  \begin{align*}
    \tau(p_m) - \tau(q_m) & = \frac{1+x_m^{\beta}}{\cos (\phi_m)} + \frac{1+x_{m+1}^{\beta}}{\cos (\phi_m)} -\frac{1+y_m^{\beta}}{\cos (\psi_m)} - \frac{1+y_{m+1}^{\beta}}{\cos (\psi_m)} \\
    & = \frac{1}{cos \phi_m}(x_m^{\beta}-y_m^{\beta}+x_{m+1}^{\beta}-y_{m+1}^{\beta})+(\frac{1}{\cos (\phi_m)}-\frac{1}{\cos \psi_m})(2+y_m^{\beta}+y_{m+1}^{\beta}).
  \end{align*}
  As lemma \ref{lem_angle_pas_trop_petit} indicates there is a $K>0$ such that $K < \cos \phi_m, \cos \psi_m$.
  Then
  \begin{align*}
    \left\vert \tau(p_m) - \tau(q_m) \right\vert & \leq \frac{1}{K}(|x_m^{\beta}-y_m^{\beta}|+|x_{m+1}^{\beta}-y_{m+1}^{\beta}|) + \frac{2+2 \epsilon^\beta}{K^2}|\cos \phi_m - \cos \psi_m|.
  \end{align*}
  using the mean value theorem there is $z_m \in [x_m,y_m]$ and $z_{m+1}\in [x_{m+1},y_{m+1}]$ such that
  \begin{align*}
    \left\vert \tau(p_m) - \tau(q_m) \right\vert & \leq \frac{1}{K}(\beta z_m^{\beta-1}|x_m-y_m|+\beta z_{m+1}^{\beta-1}|x_{m+1}-y_{m+1}|) + \frac{2+2 \epsilon^\beta}{K^2}|\cos \phi_m - \cos \psi_m|\\
  \end{align*}

  So

  \begin{align}
    \label{eq_control_diff_theta}
    |\Theta(p_0)-\Theta(q_0) | & \leq \sum_{m=0}^n |\tau(p_m) - \tau(q_m)| \nonumber \\
    & \leq \frac{2 \beta}{K} \sum_{m=1}^n z_m^{\beta-1}|x_m-y_m| + \sum_{m=0}^n \frac{2+2 \epsilon^\beta}{K^2}|\cos \phi_m - \cos \psi_m| \\
    & \leq \frac{2 \beta |x_0-y_0|}{K} \sum_{m=1}^n y_m^{\beta-1} + \frac{2+2 \epsilon^\beta}{K^2}n|\cos \phi_0 - \cos \psi_0|. \nonumber
  \end{align}
  To get the last inequality we use the fact that the differences of distance and angle along two trajectories in the same stable variety decrease.
Using lemma \ref{lem_sum_pos_beta} we can bound the sum $\sum_{m=1}^n y_m^{\beta-1}$. Moreover, using Lemma \ref{lem_structure_Sigma}, we have that
\begin{align*}
  n |\cos \phi_0 - \cos \psi_0| & \leq n |\phi_0 - \psi_0| \\
  & = n |\phi_0 - \psi_0|^{\gamma}|\phi_0 - \psi_0|^{(1-\gamma)}\\
  & \leq n |\phi_0 - \psi_0|^{\gamma}n^{-b(1-\gamma)} \\
  & = |\phi_0 - \psi_0|^{\gamma} n^{1-b(1-\gamma)}
\end{align*}
where $b=2 + \frac{\beta+2}{\beta-2} \geq 1$.
Taking $\gamma \in [0,1-1/b]$, yields a negative exponent.

Putting everything together, we have shown that$$
  |\Theta(p_0)-\Theta(q_0) | \leq |x_0- y_0| + |\phi_0 - \psi_0|^{\gamma}
$$

As we have used lemma \ref{lem_angle_pas_trop_petit} which is true for $n \geq 2$, we might also include estimation of the $\gamma$-Hölder constant of $\Theta$ in $C^1$.
\end{proof}

We can demonstrate the same lemma for unstable manifold.

\begin{lem}
  There is a $\gamma \in (0,1)$ such that for any $p=(x,\phi) \in C^n$ and $q=(y,\psi) \in C^n \cap W^u(x)$,
  $$
  \vert \Theta(p)-\Theta(q) \vert \leq d_{\MM}(T_{\Sigma}(p),T_{\Sigma}(q))^\gamma.
  $$
\end{lem}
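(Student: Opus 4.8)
The plan is to deduce the unstable estimate from the stable one just proved, using the time-reversal symmetry of the billiard rather than repeating the computation. Let $\iota : \MM \to \MM$ be the involution that reverses the direction of the trajectory; in the natural coordinates it acts as a reflection of the angle variable and is therefore an isometry for $d_{\MM}$. I would first record the three properties of $\iota$ that we need: it conjugates the return map to its inverse, $T_{\Sigma} \circ \iota = \iota \circ T_{\Sigma}^{-1}$; it exchanges stable and unstable manifolds, $\iota(W^u(r)) = W^s(\iota(r))$; and, because a reversed orbit retraces the original one and hence has the same length, the roof function satisfies $\Theta(\iota(T_{\Sigma}(r))) = \Theta(r)$ for every $r \in \Sigma$.

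Given $p,q \in C^n$ with $q \in W^u(p)$, set $\tilde{p} = \iota(T_{\Sigma}(p))$ and $\tilde{q} = \iota(T_{\Sigma}(q))$. A reversed orbit makes exactly the same number of rebounds in the window $\PP$, so $\tilde{p},\tilde{q} \in C^n$; and since unstable manifolds are invariant, $T_{\Sigma}(q) \in W^u(T_{\Sigma}(p))$, whence $\tilde{q} \in W^s(\tilde{p})$ after applying $\iota$. The reversal identity gives $\Theta(\tilde{p}) = \Theta(p)$ and $\Theta(\tilde{q}) = \Theta(q)$. Applying the stable manifold lemma to the pair $\tilde{p},\tilde{q}$ and using that $\iota$ is an isometry, one obtains
$$
|\Theta(p)-\Theta(q)| = |\Theta(\tilde{p})-\Theta(\tilde{q})| \leq d_{\MM}(\tilde{p},\tilde{q})^{\gamma} = d_{\MM}(T_{\Sigma}(p),T_{\Sigma}(q))^{\gamma},
$$
which is exactly the claim, with the same exponent $\gamma$.

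The main obstacle is not the short chain of inequalities but the careful justification of the structural properties of $\iota$, in particular that time-reversal preserves the combinatorial type $C^n$ (the number of bounces in $\PP$) and that $\Theta \circ \iota \circ T_{\Sigma} = \Theta$. Should one prefer a self-contained argument, I would instead mirror the stable proof line by line: the triangle inequality and the mean value theorem yield the same bound $\frac{2\beta}{K}\sum_{m} z_m^{\beta-1}|x_m-y_m| + \frac{2+2\epsilon^{\beta}}{K^2}\sum_{m}|\cos \phi_m - \cos \psi_m|$, but now I would use that along an unstable manifold the differences of position and angle grow forward in time, so each intermediate difference is dominated by its value at the return point $T_{\Sigma}$. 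Lemma \ref{lem_sum_pos_beta} then controls the position sum, and after noting (again by reversal together with Lemma \ref{lem_structure_Sigma}) that the angle gap at the return point is $O(n^{-b})$, the same Hölder interpolation with $\gamma \leq 1-1/b$ absorbs the factor $n$. I would favour the reversal argument, as it avoids re-deriving this angle decay.
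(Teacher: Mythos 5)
Your proposal is correct, and your primary argument takes a genuinely different route from the paper. The paper simply mirrors the stable-manifold computation: it reruns the triangle-inequality/mean-value bound and, at the step where the stable case dominates each intermediate difference $|x_m-y_m|$ and $|\phi_m-\psi_m|$ by its value at $m=0$, it instead dominates them by their values at $m=n$ (since along an unstable manifold these differences grow forward in time), which is exactly your fallback argument. Your preferred route via the time-reversal involution $\iota(r,\phi)=(r,-\phi)$ is cleaner: the three properties you isolate ($T_{\Sigma}\circ\iota=\iota\circ T_{\Sigma}^{-1}$, exchange of $W^s$ and $W^u$, and $\Theta\circ\iota\circ T_{\Sigma}=\Theta$) all hold for billiard dynamics on any table, $\iota$ preserves $\Sigma$ and the combinatorial classes $C^{\pm n}$ since the reversed orbit retraces the same bounce sequence, and the conclusion then follows formally from the stable lemma with the same $\gamma$. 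What the reversal buys is that you never have to re-justify the monotonicity of the position and angle gaps along unstable leaves, nor the endpoint version of the angle decay $|\phi_n-\psi_n|=O(n^{-b})$, both of which the paper's one-sentence proof quietly relies on; the cost is the (routine but necessary) verification of the structural properties of $\iota$, which you correctly flag as the real content of the argument. Note that the paper itself already invokes time reversal in the proof of its Lemma 3.5 to handle the third portion of the trajectory, so your approach is very much in the spirit of the text even though it is not the proof given.
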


\begin{proof}
  The proof the follow the same path as in the stable manifold case. Indeed, one just need to change the passage from the second line of equation \ref{eq_control_diff_theta} but instead of majoring each $|x_m-y_m|$ by $|x_0-y_0|$, we bound them with $|x_n-y_n|$ and replace accordingly the upper bound for the angles.
\end{proof}

Finally, for trajectories which leave the window by the same side that they went in, so $n<0$, one can define $n'$ to be the time when the trajectory changes its horizontal direction so $x_{n'-1} > x_{n'}$ and $x_{n'+ 1} > x_{n'}$. The same estimates hold for $1 \leq m \leq n''$ as in the first case and the second part can be dealt with the same trick.

We have now verified every hypothesis of Theorem \ref{thm_theorique_Gibbs_Markov_map}.
In our case $M$ is $\QQ \times \mathbb{S}^1$ and we work with the Poincaré section given by the return map to $\MM$, which is $X$ in theorem \ref{thm_theorique_Gibbs_Markov_map}. The roof function $h$ is our $\tau$. We easily chek that $\tau$ is always striclty positive and have an upper bound. The hyperbolic part $Y$ is given by $\Sigma$.
We can deduce our main Theorem \ref{thm_vitesse_billiard_continue}.


\bibliographystyle{plainnat}
\bibliography{bibliographie.bib}

\begin{thebibliography}{6}
\providecommand{\natexlab}[1]{#1}
\providecommand{\url}[1]{\texttt{#1}}
\expandafter\ifx\csname urlstyle\endcsname\relax
  \providecommand{\doi}[1]{doi: #1}\else
  \providecommand{\doi}{doi: \begingroup \urlstyle{rm}\Url}\fi

\bibitem[B{\'a}lint et~al.(2019)B{\'a}lint, Butterley, and
  Melbourne]{Balint2019polynomial}
P{\'e}ter B{\'a}lint, Oliver Butterley, and Ian Melbourne.
\newblock Polynomial decay of correlations for flows, including lorentz gas
  examples.
\newblock \emph{Communications in Mathematical Physics}, 368\penalty0
  (1):\penalty0 55--111, 2019.

\bibitem[Chernov and Zhang(2005{\natexlab{a}})]{chernov2005family}
Nikolai Chernov and H-K Zhang.
\newblock A family of chaotic billiards with variable mixing rates.
\newblock \emph{Stochastics and Dynamics}, 5\penalty0 (04):\penalty0 535--553,
  2005{\natexlab{a}}.

\bibitem[Chernov and Zhang(2005{\natexlab{b}})]{chernov2005billiards}
Nikolai Chernov and Hong-Kun Zhang.
\newblock Billiards with polynomial mixing rates.
\newblock \emph{Nonlinearity}, 18\penalty0 (4):\penalty0 1527,
  2005{\natexlab{b}}.

\bibitem[Katok and Burns(1994)]{KatokBurns}
Anatole Katok and Keith Burns.
\newblock Infinitesimal lyapunov functions, invariant cone families and
  stochastic properties of smooth dyanmical systems.
\newblock \emph{Ergodic Theory and Dynamical Systems}, 14:\penalty0 757 -- 785,
  12 1994.
\newblock \doi{10.1017/S0143385700008142}.

\bibitem[Melbourne(2009)]{melbourne2009decay}
Ian Melbourne.
\newblock Decay of correlations for slowly mixing flows.
\newblock \emph{Proceedings of the London Mathematical Society}, 98\penalty0
  (1):\penalty0 163--190, 2009.

\bibitem[Melbourne(2018)]{melbourne2018superpolynomial}
Ian Melbourne.
\newblock Superpolynomial and polynomial mixing for semiflows and flows.
\newblock \emph{Nonlinearity}, 31\penalty0 (10):\penalty0 R268, 2018.

\end{thebibliography}

\end{document}